\numberwithin{equation}{section}
\newtheorem{theorem}{Theorem}[section]
\newtheorem{proposition}[theorem]{Proposition}
\newtheorem{remark}[theorem]{Remark}
\newtheorem{corollary}[theorem]{Corollary}
\newtheorem{definition}[theorem]{Definition}
\theoremstyle{definition}
\newcommand{\R}{{\mathbb R}}
\newcommand{\dvg}{{\rm div}}
\newcommand{\elle}[1]{L^{#1}(\Omega)}
\newcommand{\hsob}{H^1_0(\Omega)}
\newcommand{\js}[1]{j_s(x,{#1},\nabla {#1})}
\newcommand{\jxi}[1]{j_\xi(x,{#1},\nabla {#1})}
\newcommand{\into}{{\int_{\Omega}}}
\newcommand{\eps}{\varepsilon}
\newcommand{\dhsob}{H^{-1}(\Omega)}
\title[Diffeomorphism-invariant properties for quasi-linear operators]{Diffeomorphism-invariant 
properties \\ for quasi-linear elliptic operators}
\author[V.~Solferino]{Viviana Solferino}
\author[M.~Squassina]{Marco Squassina}
\address{Dipartimento di Matematica
\newline\indent
Universit\`a della Calabria
\newline\indent
Ponte Pietro Bucci 30B, I-87036 Arcavacata di Rende, Cosenza,
Italy} \email{solferino@mat.unical.it}
\address{Dipartimento di Informatica
\newline\indent
Universit\`a degli Studi di Verona
\newline\indent
C\'a Vignal 2, Strada Le Grazie 15, I-37134 Verona, Italy}
\email{marco.squassina@univr.it}
\thanks{Work supported by Miur project: {\em ``Variational and Topological
Methods in the Study of Nonlinear Phenomena''}}
\subjclass[2000]{35D99, 35J62, 58E05, 35J70}
\keywords{Quasi-linear equations, generalized solutions, invariance under diffeomorphism}
\begin{document}

\begin{abstract}
For quasi-linear elliptic equations we detect relevant properties
which remain invariant under the action of a suitable class of diffeomorphisms.
This yields a connection between existence theories for equations with 
degenerate and non-degenerate coerciveness.
\end{abstract}

\maketitle
\noindent
{\em \small The second author wishes to dedicate the manuscript \newline to the memory of his mother Maria Grazia.}
\medskip

\section{Introduction}
Let $\Omega$ be a smooth bounded domain in $\R^N$. In the study of the nonlinear equation
\begin{equation}
\label{equation-intro}
-\dvg( j_\xi(x,u,\nabla u)) +j_s(x,u,\nabla u)=g(x,u)\quad\,\,\, \text{in $\Omega$},
\end{equation}
an important r\v ole is played by the coerciveness feature of $j$, namely the fact 
that there exists a positive constant $\sigma>0$ such that 
\begin{equation}
	\label{coerciv}
j(x,s,\xi)\geq \sigma |\xi|^2,\quad\,\,\text{for a.e. $x\in\Omega$ and all $(s,\xi)\in\R\times\R^N$}.
\end{equation}
Under condition \eqref{coerciv} and other suitable assumptions, including the 
boundedness of the map $s\mapsto j(x,s,\xi)$, 
equation \eqref{equation-intro} has been deeply investigated in the last twenty
years by means of variational methods and tools of non-smooth critical point theory, essentially via
two different approaches (see e.g.~\cite{Ab} and \cite{candeg} and references therein).
More recently, it was also covered the case where the map $s\mapsto j(x,s,\xi)$ is unbounded
(see e.g.~\cite{Ab1} and \cite{pelsqu}, again via different strategies). The situation is by far
more delicate under the assumption of degenerate coerciveness, namely for some function $\sigma:\R\to\R^+$
with $\sigma(s)\to 0$ as $s\to\infty$,
\begin{equation}
	\label{noncoerciv}
j(x,s,\xi)\geq \sigma(s)|\xi|^2,\quad\,\,\text{for a.e. $x\in\Omega$ and all $(s,\xi)\in\R\times\R^N$}.
\end{equation}
To the authors' knowledge, in this setting, for $j$ of the form $(b(x)+|s|)^{-2\beta}|\xi|^2/2$,
the first contribution to minimization problems is \cite{BO}, while for 
existence of mountain pass type solutions we refer to \cite{AbO}, the main point
being the fact that cluster points of arbitrary Palais-Smale sequences are bounded.
See \cite{albofeortr} for more general existence statements and
\cite{BDO,BoBr} for regularity results.

Relying upon a solid background for the treatment of \eqref{equation-intro} in the coercive case, 
the main goal of this paper is that of building
a bridge between the theory for non-degenerate coerciveness problems 
and that for problems with degenerate coerciveness. Roughly speaking, we see a solution to a degenerate problem as related
to a solution of a corresponding non-degenerate problem, preserving at the same time the main structural assumptions
typically assumed for these classes of equations. To this aim, we introduce a suitable class of diffeomorphisms
$\varphi\in C^2(\R)$ and consider the functions $j^\sharp:\Omega\times\R\times\R^N\to\R$ and $g^\sharp:\Omega\times\R\to\R$,
defined as
$$
j^\sharp(x,s,\xi)=j(x,\varphi(s),\varphi'(s)\xi), \qquad
g^\sharp(x,s)=g(x,\varphi(s))\varphi'(s),
$$
for a.e. $x\in\Omega$ and all $(s,\xi)\in\R\times\R^N$. Then, if \eqref{noncoerciv} holds,
we can find $\sigma^\sharp>0$ such that
\begin{equation*}
j^\sharp(x,s,\xi)\geq \sigma^\sharp |\xi|^2,
\end{equation*}
for a.e.~$x\in\Omega$ and all $(s,\xi)\in\R\times\R^N$, thus recovering the non-degenerate coerciveness
from the original degenerate framework. 
We shall write the corresponding Euler's equation as 
\begin{equation}
\label{equation-intro-sharp}
-\dvg( j_\xi^\sharp(x,v,\nabla v)) +j_s^\sharp(x,v,\nabla v)=g^\sharp(x,v) \quad\,\,\, \text{in $\Omega$}.
\end{equation}
A first natural issue is the correspondence between the solutions of \eqref{equation-intro}
and the solutions of \eqref{equation-intro-sharp} through the diffeomorphism $\varphi$. Roughly speaking, the natural connection
is that $u=\varphi(v)$ is a solution of \eqref{equation-intro} when
$v$ is a solution to \eqref{equation-intro-sharp}, in some sense.
On the other hand, in general, $\varphi(v)\not\in  H^1_0(\Omega)$ although $v\in H^1_0(\Omega)$. Hence, the notion of solution
for functions in the Sobolev space $H^1_0(\Omega)$ cannot remain invariant under the action of $\varphi$, unless 
$v\in L^\infty(\Omega)$. In fact, we provide a new 
definition of generalized solution which is partly based
upon the notion of renormalized solution introduced in \cite{DMOP} 
in the study of elliptic equations with general measure data and partly on the variational formulation adopted in \cite{pelsqu}.
The new notion turns out to be invariant under diffeomorphisms (Proposition~\ref{soluzdisrt}) as well as conveniently related
to the machinery developed in \cite{pelsqu}.
Moreover, we detect two relevant invariant conditions.
The first (Proposition~\ref{invar1}) is a modification of the standard (non-invariant) 
sign condition 
\begin{equation}
	\label{classicsign}
j_s(x,s,\xi)s\geq 0,\quad \text{for all $|s|\geq R$ and some $R\geq 0$},
\end{equation}
namely there exist $\eps\in (0,1)$ and $R\geq 0$ such that
\begin{equation}
	\label{generalsign}
(1-\eps) j_\xi(x,s,\xi)\cdot\xi+j_s(x,s,\xi)s\geq 0,
\end{equation}
for a.e.~$x\in\Omega$ and all $(s,\xi)\in\R\times\R^N$ such that $|s|\geq R$. Condition~\eqref{classicsign}
is well known \cite{Ab,Ab1,AbO,candeg,pelsqu} and plays an important r\v ole in the study of both 
existence and summability issues for \eqref{equation-intro}.
The second one (Proposition~\ref{invar2}) is the generalized Ambrosetti-Rabinowitz \cite{AR} condition:
there exist $\delta>0$, $\nu>2$ and $R\geq 0$ such that
\begin{equation}
	\label{genARcondition}
\nu j(x,s,\xi)-(1+\delta)j_\xi(x,s,\xi)\cdot\xi-j_s(x,s,\xi)s-\nu G(x,s)+g(x,s)s\geq 0,
\end{equation}
for a.e.~$x\in\Omega$ and all $(s,\xi)\in\R\times\R^N$ with $|s|\geq R$.
Typically, this condition guarantees that an arbitrary Palais-Smale sequence is bounded 
\cite{Ab,Ab1,candeg,pelsqu}. The invariant properties for growth conditions are stated in
Proposition~\ref{rm1}, \ref{menoinf} and \ref{newgrow}. In the situations where
\begin{equation*}
j_s^\sharp(x,s,\xi)s\geq 0,\quad \text{for all $|s|\geq R^\sharp$ and some $R^\sharp\geq 0$},
\end{equation*}
the results of our paper allow to obtain existence and multiplicity
of solutions for problems with degenerate coercivity by a {\em direct} 
application of the results of \cite{pelsqu} (see Theorem~\ref{existthmm}). This is new 
compared with the results of \cite{AbO}, since the technique adopted therein does not allow
to obtain multiplicity results. In addition, contrary to \cite{AbO}, under certain assumptions on the nonlinearity
$g$, the solutions need not to be bounded.
The further development of the ideas in this paper, is related to strengthening some of the results
of \cite{pelsqu}, in order to allow the weaker sign condition \eqref{generalsign} 
to replace the standard sign condition \eqref{classicsign}. 
Then existence and multiplicity theorems for coercive equations with unbounded coefficients
automatically recover existence and multiplicity theorems for equations 
with degenerate coercivity. This will be the subject of a further investigation.
\vskip4pt
\noindent
The plan of the paper is as follows.
\newline In Section~\ref{generalizedsol} we introduce 
a new notion of generalized solution for \eqref{equation-intro} and prove that 
it is invariant under the action of $\varphi$. In Section~\ref{growthsect} we show how
$\varphi$ affects some useful growth conditions.
In Section~\ref{signsect} we study the invariance of the sign condition \eqref{generalsign}
and get some related summability results. In Section~\ref{ambrrabsect}, we consider the invariance of an Ambrosetti-Rabinowitz
(AR, in brief) type inequality \eqref{genARcondition}. Finally, in Section~\ref{existence-sect}, we shall get a new existence results for multiple,
possibly unbounded, generalized solutions of \eqref{equation-intro}.

\section{Invariant properties}

Now let $\Omega$ be a smooth bounded domain in $\R^N$. We consider
$j:\Omega\times\R\times\R^N\to\R$ with
$j(\cdot,s,\xi)$ measurable in $\Omega$ for all $s\in\R$ and
$\xi\in\R^N$ and $j(x,\cdot,\cdot)$ of class $C^1$ for
a.e.~$x\in\Omega$. Moreover, we assume that the map
$\xi\mapsto j(x,s,\xi)$ is strictly convex and
there exist $\alpha,\gamma,\mu:\R^+\to\R^+$ continuous 
with $\alpha(s)\geq 1$ for all $s\in\R^+$ and such that
\begin{gather}
    \label{originalgrowths1}
\frac{1}{\alpha(|s|)} |\xi|^2  \leq j(x,s,\xi)\leq \alpha(|s|)|\xi|^2,   \\
\label{originalgrowths2}
|j_s(x,s,\xi)|\leq \gamma(|s|)|\xi|^2, \quad\quad
|j_\xi(x,s,\xi)|\leq\mu(|s|)|\xi|,
\end{gather}
for a.e.~$x\in\Omega$ and all $(s,\xi)\in\R\times\R^N$. Actually, the second inequality of \eqref{originalgrowths2}
can be deduced by the strict convexity of $\xi\mapsto j(x,s,\xi)$ and the right inequality of \eqref{originalgrowths1}.
Furthermore, again by the strict convexity of $\xi\mapsto j(x,s,\xi)$ and the left inequality of \eqref{originalgrowths1} it holds
\begin{equation}\label{jxicoerc}
j_\xi(x,s,\xi)\cdot \xi\geq \frac{1}{\alpha(|s|)} |\xi|^2,
\end{equation}
see \cite[Remarks 4.1 and 4.3]{pelsqu}.
Without loss of generality, one may assume that 
$\alpha,\gamma,\mu:\R^+\to\R^+$ appearing in the
growth conditions of $j,j_s,j_\xi$ are monotonically
increasing. Indeed, we can always replace them 
by the increasing functions $\alpha_0,\gamma_0,\mu_0:\R^+\to\R^+$ defined by
\begin{equation*}
\alpha_0(r)=\sup\limits_{s\in [-r,r]} \alpha(|s|), \quad\,\,
\gamma_0(r)=\sup\limits_{s\in [-r,r]} \gamma(|s|),\quad\,\,
\mu_0(r)=\sup\limits_{s\in [-r,r]} \mu(|s|).
\end{equation*}
We shall also assume that $g:\Omega\times\R\to\R$ is a Carath\'eodory function such that
\begin{equation}
\label{sumbgcond}
	\sup_{|t|\leq s}|g(\cdot,t)|\in L^1(\Omega),\quad\,\,\text{for every $s\in\R^+$,}
\end{equation}
and we set $G(x,s)=\int_0^s g(x,t)dt$, for every $s\in\R$.

\begin{definition}
    \label{diffeoclass}
For an odd diffeomorphism $\varphi:\R\to\R$ of class $C^2$ such that $\varphi(0)=0$, 
we consider the following properties 
  \begin{align}
    \label{one}
    \varphi'(s)\geq \sigma\sqrt{\alpha(|\varphi(s)|)},\qquad\text{for all $s\in\R$ and some $\sigma>0$}. \\
        \label{two}
 \lim_{s\to + \infty}
\frac{s\varphi'(s)}{\varphi(s)}= 1+\lim_{s\to + \infty}
\frac{s\varphi''(s)}{\varphi'(s)}=\frac{1}{1-\beta},\qquad\text{for some $\beta\in [0,1)$.}
\end{align}
\end{definition}

\noindent
A simple model satisfying the requirements of Definition~\ref{diffeoclass} is the function
\begin{equation}
	\label{fimodel}
\varphi(s)=s{(1+s^2)}^{\frac{\beta}{2(1-\beta)}},\qquad\text{for
all $s\in\R$}, \qquad 0\leq \beta<1,
\end{equation}
in the case when $\alpha(t)=C(1+t)^{2\beta}$, for some $C>0$.

\begin{definition}
Consider the functions
$$
j:\Omega\times\R\times\R^N\to\R,\quad
g:\Omega\times\R\to\R, \quad
G:\Omega\times\R\to\R,
$$
and let
$\varphi\in C^2(\R)$ be a diffeomorphism according to Definition~\ref{diffeoclass}. We define
$$
j^\sharp:\Omega\times\R\times\R^N\to\R,\quad
g^\sharp:\Omega\times\R\to\R, \quad
G^\sharp:\Omega\times\R\to\R,
$$
by setting
$$
j^\sharp(x,s,\xi)=j(x,\varphi(s),\varphi'(s)\xi),
$$
for a.e. $x\in\Omega$ and all $(s,\xi)\in\R\times\R^N$ and
$$
g^\sharp(x,s)=g(x,\varphi(s))\varphi'(s), \qquad
G^\sharp(x,s)=\int_0^{s} g^\sharp(x,t)dt=G(x,\varphi(s)),
$$
for a.e. $x\in\Omega$ and all $s\in\R$.
\end{definition}

\noindent
Now we see that $\varphi$ turns a degenerate problem associated with $j$ into a non-degenerate one,
associated with $j^\sharp$ and that $j^\sharp,j_s^\sharp$ and $j_\xi^\sharp$ satisfy 
growths analogous to those of $j,j_s$ and $j_\xi$.

\begin{proposition}
    \label{rm1}
	Let $\varphi\in C^2(\R)$ be a diffeomorphism which satisfies 
	the properties of Definition~\ref{diffeoclass}.
Assume that $\alpha,\gamma,\mu:\R^+\to\R^+$ satisfy the growth conditions~\eqref{originalgrowths1}-\eqref{originalgrowths2}.
Then there exist continuous functions $\alpha^\sharp,\gamma^\sharp,\mu^\sharp:\R^+\to\R^+$ and $\sigma^\sharp>0$ such that
\begin{gather*}
\sigma^\sharp |\xi|^2 \leq j^\sharp(x,s,\xi)\leq \alpha^\sharp(|s|)|\xi|^2, \\
    \noalign{\vskip2pt}
    |j^\sharp_s(x,s,\xi)|\leq  \gamma^\sharp(|s|)|\xi|^2,  \quad\,\,\,
    |j^\sharp_\xi(x,s,\xi)|\leq  \mu^\sharp(|s|)|\xi|,
\end{gather*}
for a.e.~$x\in\Omega$ and all $(s,\xi)\in\R\times\R^N$.
\end{proposition}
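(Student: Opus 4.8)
The strategy is to compute $j^\sharp$ and its partial derivatives explicitly via the chain rule, and then to substitute the growth bounds \eqref{originalgrowths1}--\eqref{originalgrowths2} for $j,j_s,j_\xi$ evaluated at the point $(\varphi(s),\varphi'(s)\xi)$, finally absorbing the factors of $\varphi'(s)$ and $\varphi''(s)$ into new continuous functions of $|s|$ using property \eqref{two} (to control growth at infinity) and continuity on compact sets (to control everything else). The key point that makes the coercivity constant $\sigma^\sharp$ \emph{uniform} rather than $s$-dependent is property \eqref{one}.

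First I would record the differentiation formulas. Since $j^\sharp(x,s,\xi)=j(x,\varphi(s),\varphi'(s)\xi)$, the chain rule gives
\begin{gather*}
j^\sharp_\xi(x,s,\xi)=\varphi'(s)\,j_\xi(x,\varphi(s),\varphi'(s)\xi),\\
j^\sharp_s(x,s,\xi)=\varphi'(s)\,j_s(x,\varphi(s),\varphi'(s)\xi)+\varphi''(s)\,j_\xi(x,\varphi(s),\varphi'(s)\xi)\cdot\xi.
\end{gather*}
Then the upper bound on $j^\sharp$ follows from the right inequality in \eqref{originalgrowths1}: $j^\sharp(x,s,\xi)\le\alpha(|\varphi(s)|)\,|\varphi'(s)\xi|^2=\alpha(|\varphi(s)|)\varphi'(s)^2|\xi|^2$, so one sets $\alpha^\sharp(r)=\sup_{|s|\le r}\alpha(|\varphi(s)|)\varphi'(s)^2$, which is finite and continuous (indeed monotone after the usual adjustment) because $\varphi,\varphi'$ are continuous and $\alpha$ is continuous. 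Similarly $|j^\sharp_\xi(x,s,\xi)|\le\varphi'(s)\mu(|\varphi(s)|)|\varphi'(s)\xi|=\mu(|\varphi(s)|)\varphi'(s)^2|\xi|$, giving $\mu^\sharp$, and for $j^\sharp_s$ I would estimate $|j^\sharp_s|\le\varphi'(s)\gamma(|\varphi(s)|)\varphi'(s)^2|\xi|^2+|\varphi''(s)|\mu(|\varphi(s)|)\varphi'(s)^2|\xi|^2$ using \eqref{originalgrowths2} and the Cauchy--Schwarz–free bound $|j_\xi\cdot\xi|\le|j_\xi||\xi|$, so $\gamma^\sharp(r)=\sup_{|s|\le r}\bigl(\varphi'(s)^3\gamma(|\varphi(s)|)+\varphi'(s)^2|\varphi''(s)|\mu(|\varphi(s)|)\bigr)$ works.

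For the lower bound, I would use the left inequality in \eqref{originalgrowths1}: $j^\sharp(x,s,\xi)\ge\frac{1}{\alpha(|\varphi(s)|)}|\varphi'(s)\xi|^2=\frac{\varphi'(s)^2}{\alpha(|\varphi(s)|)}|\xi|^2$, and then invoke \eqref{one}, which says precisely $\varphi'(s)^2\ge\sigma^2\alpha(|\varphi(s)|)$, so $j^\sharp(x,s,\xi)\ge\sigma^2|\xi|^2$; thus $\sigma^\sharp=\sigma^2$ works (here $\alpha\ge1$ guarantees the quotient is well defined). The main obstacle—really the only subtle point—is verifying that the suprema defining $\alpha^\sharp,\gamma^\sharp,\mu^\sharp$ are genuinely finite for every $r$, i.e., that no blow-up occurs on compact intervals; but this is immediate since $\varphi\in C^2(\R)$ and $\alpha,\gamma,\mu$ are continuous, so all the functions being supped are continuous on $[-r,r]$. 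Property \eqref{two} is not strictly needed for this proposition (it becomes essential only when one wants explicit polynomial growth of $\alpha^\sharp$ as in the model \eqref{fimodel}), so I would simply remark that finiteness is all that is required here. This completes the construction of all four objects.
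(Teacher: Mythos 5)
Your proof takes the same route as the paper's: explicit chain-rule formulas for $j^\sharp_\xi$ and $j^\sharp_s$, direct substitution of the growth bounds \eqref{originalgrowths1}--\eqref{originalgrowths2} at the point $(\varphi(s),\varphi'(s)\xi)$, and $\sigma^\sharp=\sigma^2$ from \eqref{one} for the uniform lower bound. Your observation that \eqref{two} plays no role here is also correct, as is your point that the only thing to check is local finiteness, which is automatic from continuity.

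One small computational slip: in the second term of the bound on $|j^\sharp_s|$ you wrote $|\varphi''(s)|\mu(|\varphi(s)|)\varphi'(s)^2|\xi|^2$, but the correct estimate is
\[
|\varphi''(s)\,j_\xi(x,\varphi(s),\varphi'(s)\xi)\cdot\xi|
\;\le\; |\varphi''(s)|\,\mu(|\varphi(s)|)\,|\varphi'(s)\xi|\,|\xi|
\;=\; |\varphi''(s)|\,\mu(|\varphi(s)|)\,\varphi'(s)\,|\xi|^2,
\]
with only one power of $\varphi'(s)$ (the other $|\xi|$ coming from the dot product carries no $\varphi'$). The correct $\gamma^\sharp$ is therefore built from $\varphi'(s)^3\gamma(|\varphi(s)|)+\varphi'(s)|\varphi''(s)|\mu(|\varphi(s)|)$, as in the paper. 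Since \eqref{one} only guarantees $\varphi'\ge\sigma$ without ensuring $\varphi'\ge 1$, the extra factor you inserted is not automatically harmless, so the exponent should be corrected; once corrected the rest of your argument goes through verbatim.
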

\begin{proof}
In light of \eqref{originalgrowths1} and of \eqref{one} of Definition~\ref{diffeoclass}, for $\sigma^\sharp=\sigma^2$, we have
    \begin{equation*}
\sigma^\sharp|\xi|^2\leq \frac{\varphi'(s)^2}{\alpha(|\varphi(s)|)}|\xi|^2\leq j(x,\varphi(s),\varphi'(s)\xi) 
\leq \alpha(|\varphi(s)|)\varphi'(s)^{2}|\xi|^2
    \end{equation*}
for a.e.~$x\in\Omega$ and all $(s,\xi)\in\R\times\R^N$. Furthermore, 
by virtue of \eqref{originalgrowths2}, we have
$$
|j^\sharp_\xi(x,s,\xi)|\leq (\varphi'(s))^2\mu(|\varphi(s)|)|\xi|,
$$
for a.e.~$x\in\Omega$ and all $(s,\xi)\in\R\times\R^N$, as well as
\begin{equation*}
|j^\sharp_s(x,s,\xi)| \leq 	[|\varphi''(s)| \mu(|\varphi(s)|)\varphi'(s)
	+(\varphi'(s))^3\gamma(|\varphi(s)|)]|\xi|^2,
\end{equation*}
for a.e.~$x\in\Omega$ and all $(s,\xi)\in\R\times\R^N$. The
assertions follow with $\alpha^\sharp,\gamma^\sharp,\mu^\sharp:\R\to\R^+$,
\begin{align*}
\alpha^\sharp(s) & =\alpha(|\varphi(s)|)\varphi'(s)^2,   \\
\gamma^\sharp(s)  & =|\varphi''(s)|\mu(|\varphi(s)|)\varphi'(s)+(\varphi'(s))^3\gamma(|\varphi(s)|),   \\
\mu^\sharp (s) & =(\varphi'(s))^2\mu(|\varphi(s)|),
\end{align*}
for all $s\in\R$. Of course, without loss of generality, one can then substitute $\alpha^\sharp,\gamma^\sharp,\mu^\sharp$ with
even functions satisfying the same growth controls. 
\end{proof}

\subsection{Generalized solutions}
\label{generalizedsol}

For any $k>0$, consider the truncation $T_k:\R\to\R$,
$$
T_k(s)
=
\begin{cases}
    s & \text{for $|s|\leq k$}, \\
    k\,{\rm sign}(s)   & \text{for $|s|\geq k$}.
\end{cases}
$$
Moreover, as in \cite{pelsqu}, for a measurable function $u:\Omega\to\R$,
let us consider the space
\begin{equation}
    \label{sottopiccolo}
V_u=\big\{v\in H^1_0(\Omega)\cap L^\infty(\Omega): u\in L^\infty(\{v\neq 0\})\big\}.
\end{equation}
This functional space was originally introduced by Degiovanni and Zani 
for functions $u$ of $H^1_0(\Omega)$, in which case $V_u$ turns out to be a dense subspace 
of $H^1_0(\Omega)$ (cf.\ \cite{DZ}). Observe that, in view of conditions \eqref{originalgrowths2} 
and \eqref{sumbgcond}, it follows
$$
\jxi{u}\cdot \nabla v \in L^1(\Omega),\quad\,\,
\js{u}v\in L^1(\Omega),\quad\,\, g(x,u)v\in L^1(\Omega),
$$
for every $v\in V_u$ and any measurable $u:\Omega\to\R$ with 
$T_k(u) \in H^1_0(\Omega)$ for every $k>0$. For such functions, according to \cite{DMOP},
the meaning of $\nabla u$ will be made clear in the proof of Proposition~\ref{soluzdisrt}.
\vskip2pt
\noindent
In the spirit of \cite{DMOP}, where the notion of renormalized solution
is introduced, and \cite{pelsqu}, where the notion of generalized solution
is given, based upon $V_u$, we now introduce the following 

\begin{definition}
    \label{defsol-bis}
We say that $u$ is a generalized solution to
\begin{equation}\label{problema-bis}
\begin{cases}
-\,\dvg( j_\xi(x,u,\nabla u)) +j_s(x,u,\nabla u)=g(x,u), & \text{in $\Omega$}, \\
\quad u=0, & \text{on $\partial\Omega$},
\end{cases}
\end{equation}
if $u$ is a measurable function finite almost everywhere, such that
\begin{equation}
	\label{trunk-sol}
T_k(u) \in H^1_0(\Omega),\quad\text{for all $k>0$},
\end{equation}
and, furthermore,
\begin{equation}
	\label{summab-solger}
\jxi{u}\cdot\nabla u\in\elle1,\qquad \js{u}u\in \elle1,
\end{equation}
and
\begin{equation}
	\label{condizioneVu}
\into \jxi{u}\cdot \nabla w +\into
\js{u}w=\into g(x,u)w, \quad\forall w\in V_u.
\end{equation}
\end{definition}

\begin{remark}\rm
We point out that, in \cite[Definition 1.1]{pelsqu}, a different notion of generalized 
solution of problem \eqref{problema-bis} is introduced
when $u$ belongs to the Sobolev space $H^1_0(\Omega)$. On the other hand, actually, 
by \cite[Theorem 4.8]{pelsqu} the two notions agree, whenever $u\in H^1_0(\Omega)$.
Also, the variational formulation \eqref{condizioneVu} with test functions in $V_u$ is 
conveniently related to the weak slope \cite{DM,CDM} of the functional associated with \eqref{problema-bis}, see
\cite[Proposition 4.5]{pelsqu} (see also Proposition \ref{sommab}).
\end{remark}

\noindent
The following proposition establishes a link between the generalized solutions
of the problem under the change of variable procedure.

\begin{proposition}\label{soluzdisrt}
Let $\varphi\in C^2(\R)$ be a diffeomorphism which satisfies 
the properties of Definition~\ref{diffeoclass}.
Assume that $v$ is a generalized solution to
\begin{equation}
    \label{probmoddd}
\begin{cases}
-\dvg(j^\sharp_\xi(x,v,\nabla v)) +j^\sharp_s(x,v,\nabla v)=g^\sharp(x,v) & \text{in $\Omega$}, \\
\quad v=0,  & \text{on $\partial\Omega$}.
\end{cases}
\end{equation}
Then $u=\varphi(v)$ is a generalized solution to
\begin{equation}
    \label{ooorigg}
\begin{cases}
-\dvg( j_\xi(x,u,\nabla u)) +j_s(x,u,\nabla u)=g(x,u), & \text{in $\Omega$}, \\
\quad u=0, & \text{on $\partial\Omega$}.
\end{cases}
\end{equation}
If in addition $v\in H^1_0\cap L^\infty(\Omega)$, 
then $u\in H^1_0\cap L^\infty(\Omega)$ is a distributional solution to~\eqref{ooorigg}.
\end{proposition}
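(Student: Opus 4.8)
The plan is to verify the three defining requirements of Definition~\ref{defsol-bis} for $u=\varphi(v)$, using the fact that $v$ is a generalized solution to \eqref{probmoddd}, and exploiting the chain-rule identities relating $j^\sharp,g^\sharp$ to $j,g$. First I would address the truncation property \eqref{trunk-sol}. Since $\varphi$ is an increasing (odd, hence sign-preserving) diffeomorphism, $T_k(u)=T_k(\varphi(v))=\varphi(T_{\varphi^{-1}(k)}(v))$, and since $T_m(v)\in H^1_0(\Omega)$ for every $m>0$ with $\varphi$ of class $C^2$ (so locally Lipschitz on the bounded range of the truncation), the composition lies in $H^1_0(\Omega)$; moreover $\nabla T_k(u)=\varphi'(v)\nabla T_m(v)$ a.e.\ on $\{|u|\le k\}$, which gives a consistent definition of $\nabla u$ on all of $\Omega$ via $\nabla u=\varphi'(v)\nabla v$, in the sense of \cite{DMOP} (this is where the meaning of $\nabla u$ promised earlier is pinned down). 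The key pointwise identities are
\begin{align*}
j_\xi(x,\varphi(v),\varphi'(v)\nabla v) &= \frac{1}{\varphi'(v)}\,j^\sharp_\xi(x,v,\nabla v), \\
j_s(x,\varphi(v),\varphi'(v)\nabla v)\,\varphi'(v) + j_\xi(x,\varphi(v),\varphi'(v)\nabla v)\cdot(\varphi''(v)\nabla v) &= j^\sharp_s(x,v,\nabla v),
\end{align*}
obtained by differentiating $j^\sharp(x,s,\xi)=j(x,\varphi(s),\varphi'(s)\xi)$ in $s$ and in $\xi$.

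Next I would check the summability conditions \eqref{summab-solger}. From the first identity, $j_\xi(x,u,\nabla u)\cdot\nabla u = \frac{1}{\varphi'(v)}j^\sharp_\xi(x,v,\nabla v)\cdot\varphi'(v)\nabla v = j^\sharp_\xi(x,v,\nabla v)\cdot\nabla v$, which is in $L^1(\Omega)$ because $v$ is a generalized solution to \eqref{probmoddd}; so the first part of \eqref{summab-solger} is immediate. For $j_s(x,u,\nabla u)u$, I would write $j_s(x,u,\nabla u) = \frac{1}{\varphi'(v)}\big[j^\sharp_s(x,v,\nabla v) - j_\xi(x,u,\nabla u)\cdot\varphi''(v)\nabla v\big]$ using the second identity, hence $j_s(x,u,\nabla u)u = \frac{\varphi(v)}{\varphi'(v)}j^\sharp_s(x,v,\nabla v) - \frac{\varphi(v)\varphi''(v)}{\varphi'(v)}\,j^\sharp_\xi(x,v,\nabla v)\cdot\nabla v$ (rewriting the last term via the first identity again). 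By condition \eqref{two} the ratios $s\mapsto \varphi(s)/(s\varphi'(s))$ and $s\mapsto s\varphi''(s)/\varphi'(s)$ are bounded for $|s|$ large, so $|\varphi(v)/\varphi'(v)|\le C(1+|v|)$ and $|\varphi(v)\varphi''(v)/\varphi'(v)|\le C(1+|v|)$; combined with $j^\sharp_s(x,v,\nabla v)v\in L^1$ and $j^\sharp_\xi(x,v,\nabla v)\cdot\nabla v\, v\in L^1$, which follow from the generalized-solution property of $v$ together with the growth controls for $j^\sharp$ (Proposition~\ref{rm1}) applied on the super-level sets, one concludes $j_s(x,u,\nabla u)u\in L^1(\Omega)$. (On the bounded set $\{|v|\le R\}$ everything is controlled directly by the continuous functions $\mu^\sharp,\gamma^\sharp$ and $|\nabla v|^2\in L^1$.)

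Finally I would establish the weak formulation \eqref{condizioneVu}. Given $w\in V_u$, the natural candidate test function for problem \eqref{probmoddd} is $\tilde w = w/\varphi'(v)$; I must check $\tilde w\in V_v$, i.e.\ $\tilde w\in H^1_0\cap L^\infty(\Omega)$ with $v\in L^\infty(\{\tilde w\ne 0\})$. The support condition is clear since $\{\tilde w\ne 0\}=\{w\ne 0\}$ and $u\in L^\infty(\{w\ne 0\})$ forces $v=\varphi^{-1}(u)\in L^\infty(\{w\ne 0\})$; on that set $v$ is bounded, so $\varphi'(v)$ is bounded away from $0$ and $\infty$ and of bounded gradient, whence $\tilde w\in H^1_0\cap L^\infty(\Omega)$ with $\nabla\tilde w = \nabla w/\varphi'(v) - w\varphi''(v)\nabla v/\varphi'(v)^2$. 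Plugging $\tilde w$ into the identity satisfied by $v$ and substituting the chain-rule expressions for $j^\sharp_\xi,j^\sharp_s,g^\sharp$, the $\varphi'(v)$ factors telescope: the term $j^\sharp_\xi\cdot\nabla\tilde w$ produces $j_\xi(x,u,\nabla u)\cdot\nabla w$ plus a term $-\,w\,j_\xi(x,u,\nabla u)\cdot\varphi''(v)\nabla v$, which cancels exactly against the contribution of $j^\sharp_s\,\tilde w$ coming from the $\varphi''$ part of the second identity above; what remains is $j_s(x,u,\nabla u)w$, while $g^\sharp(x,v)\tilde w = g(x,\varphi(v))\varphi'(v)\cdot w/\varphi'(v) = g(x,u)w$. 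This yields \eqref{condizioneVu}. The main obstacle is the bookkeeping in this last step — ensuring all integrands are genuinely $L^1$ so that the formal cancellations are legitimate (which is where the summability estimates of the previous paragraph, applied now with the extra bounded factor $w$ or on the set $\{w\ne 0\}$ where $v$ is bounded, are used) — and, secondarily, carefully justifying that $\nabla u=\varphi'(v)\nabla v$ is the correct \cite{DMOP}-type gradient entering all these formulas. For the last assertion, if $v\in H^1_0\cap L^\infty(\Omega)$ then $\varphi$ restricted to the bounded range of $v$ is Lipschitz with Lipschitz derivative, so $u=\varphi(v)\in H^1_0\cap L^\infty(\Omega)$ and $\nabla u=\varphi'(v)\nabla v$ in the usual weak sense; since $V_u\supseteq C^\infty_c(\Omega)$ in this case and all integrability is automatic, \eqref{condizioneVu} specializes to the distributional formulation of \eqref{ooorigg}.
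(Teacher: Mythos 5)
Your overall architecture mirrors the paper's proof: establish the truncation/chain rule for $\nabla u=\varphi'(v)\nabla v$ in the \cite{DMOP} sense, transfer the summability conditions \eqref{summab-solger}, and convert the variational identity by the correspondence $V_u\ni z\leftrightarrow z/\varphi'(v)\in V_v$. The truncation and test-function parts are fine and match the paper (the paper goes from $w\in V_v$ to $\hat w=\varphi'(v)w\in V_u$ and then inverts; you go directly from $z\in V_u$ to $z/\varphi'(v)\in V_v$, which is the same bijection).

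There is, however, a genuine gap in the summability step. Your displayed identity
\[
j_s(x,u,\nabla u)u=\frac{\varphi(v)}{\varphi'(v)}j^\sharp_s(x,v,\nabla v)-\frac{\varphi(v)\varphi''(v)}{\varphi'(v)}\,j^\sharp_\xi(x,v,\nabla v)\cdot\nabla v
\]
has the wrong power of $\varphi'(v)$ in the last denominator: carrying through the substitution $j_\xi(x,u,\nabla u)\cdot\nabla v=\tfrac{1}{\varphi'(v)}j^\sharp_\xi(x,v,\nabla v)\cdot\nabla v$ yields $\varphi'(v)^2$, not $\varphi'(v)$. That slip propagates: with $\varphi'(v)$ in the denominator the coefficient grows like $\varphi'(v)$ (unbounded when $\beta>0$), which is why you are driven to invoke $j^\sharp_\xi(x,v,\nabla v)\cdot\nabla v\,v\in L^1(\Omega)$. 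But this is \emph{not} part of Definition~\ref{defsol-bis} for $v$ and does not follow from it or from Proposition~\ref{rm1}: the definition only gives $j^\sharp_\xi(x,v,\nabla v)\cdot\nabla v\in L^1$ and $j^\sharp_s(x,v,\nabla v)v\in L^1$, and multiplying an $L^1$ function by the unbounded $v$ has no reason to stay in $L^1$. So, as written, the step does not close.

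Once the algebra is fixed the issue disappears and the extra integrability is not needed. With
\[
j_s(x,u,\nabla u)u=\frac{\varphi(v)}{v\,\varphi'(v)}\,\big[j^\sharp_s(x,v,\nabla v)v\big]-\frac{\varphi(v)\varphi''(v)}{\varphi'(v)^2}\,\big[j^\sharp_\xi(x,v,\nabla v)\cdot\nabla v\big],
\]
both coefficients are \emph{bounded} functions of $v$: by \eqref{two}, $\tfrac{\varphi(s)}{s\varphi'(s)}\to 1-\beta$ and $\tfrac{\varphi(s)\varphi''(s)}{\varphi'(s)^2}=\tfrac{\varphi(s)}{s\varphi'(s)}\cdot\tfrac{s\varphi''(s)}{\varphi'(s)}\to\beta(1-\beta)$ as $|s|\to\infty$, and both are continuous with finite limits at $s=0$. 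Hence the two bracketed $L^1$ quantities from Definition~\ref{defsol-bis} suffice. This is exactly what the paper does, phrased in the inverse direction: it writes $j^\sharp_s(x,v,\nabla v)v$ as a combination of $j_s(x,u,\nabla u)u$ and $j_\xi(x,u,\nabla u)\cdot\nabla u$ with bounded coefficients $\tfrac{v\varphi'(v)}{\varphi(v)}$ (bounded away from zero) and $\tfrac{v\varphi''(v)}{\varphi'(v)}$, and then solves for $j_s(x,u,\nabla u)u$. You should correct the formula, drop the $C(1+|v|)$ bounds in favor of genuine boundedness of the coefficients, and remove the claim that $j^\sharp_\xi(x,v,\nabla v)\cdot\nabla v\,v\in L^1$.
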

\begin{proof}
	As proved in \cite{DMOP}, for a measurable function $u$ on $\Omega$, finite almost everywhere,
with $T_k(u)\in H^1_0(\Omega)$ for any $k>0$, there exists 
a unique $\omega:\Omega\to\R^N$, measurable and such that
\begin{equation}
	\label{gradientcharacteriz}
\nabla T_k(u)=\omega\chi_{\{|u|\leq k\}},\quad \text{almost everywhere in $\Omega$ and for all $k>0$.}	
\end{equation}
Then, the gradient $\nabla u$ of $u$ is naturally defined by setting $\nabla u=\omega$. 
Assume that $\varphi:\R\to\R$ is a diffeomorphism with $\varphi(0)=0$ and that
for a measurable function $v$ on $\Omega$ it holds $T_k(v)\in H^1_0(\Omega)$ for every $k>0$. Then, setting $u=\varphi(v)$,
it follows $T_k(u)\in H^1_0(\Omega)$ for every $k>0$. In fact, given $k>0$, there exists $h>0$ such that
$T_k(u) = (T_k\circ\varphi)\circ T_{h}(v)$. Since $T_k\circ\varphi:\R\to\R$ is a globally
Lipschitz continuous function which is zero at zero, it follows that
$T_k(u)\in H^1_0(\Omega)$ for all $k>0$. 
Moreover, if $\nabla u$ and $\nabla v$ denote
the gradients of $u$ and $v$ respectively, in the sense pointed out above, we get the following chain rule 
\begin{equation}
	\label{chainrule}
\nabla u=\varphi'(v)\nabla v,\quad \text{almost everywhere in $\Omega$}.
\end{equation}
In fact, for all $k>0$, since $T_k(u),T_h(v)\in H^1_0(\Omega)$, from $T_k(u) = (T_k\circ\varphi)\circ T_{h}(v)$
we can write
$$
\nabla T_k(u)=(T_k\circ\varphi)'(T_{h}(v))\nabla T_h(v),
$$
for every $k>0$, namely, by \eqref{gradientcharacteriz},
\begin{equation}
\label{gradprimaff} 
\nabla u\chi_{\{|\varphi(v)|\leq k\}}=(T_k\circ\varphi)'(T_{h}(v))\nabla v\chi_{\{|v|\leq h\}},
\quad \text{almost everywhere in $\Omega$}.
\end{equation}
Let now $x\in\Omega$ be an arbitrary point with $|v(x)|\leq h$. In turn, by construction, $|\varphi(v(x))|\leq k$, 
and formula~\eqref{gradprimaff} yields directly
\begin{equation}
\label{gradprimaff-2} 
\nabla u=(T_k\circ\varphi)'(v)\nabla v,
\quad \text{almost everywhere in $\{|v|\leq h\}$}.
\end{equation}
Formula \eqref{chainrule} then follows by taking into account that $(T_k\circ\varphi)'(v(x))=\varphi'(v(x))$
almost everywhere in $\{|v|\leq h\}$ and by the arbitrariness of $h>0$.

Let now $v$ be a generalized solution to~\eqref{probmoddd}, so that $T_k(v)\in H^1_0(\Omega)$ for all $k>0$.
As pointed out above, it follows that $T_k(u)\in H^1_0(\Omega)$ too, for every $k>0$
and the chain rule $\nabla u=\varphi'(v)\nabla v$ holds, almost everywhere in $\Omega$. 
From the definition of generalized solution we learn that
\begin{equation}
    \label{startingsum}
 j^\sharp_{\xi}(x,v,\nabla v)\cdot\nabla v\in\elle1, \qquad
j^\sharp_s(x,v,\nabla v)v\in \elle1,
\end{equation}
as well as
\begin{equation}
    \label{solut1}
\into j^\sharp_\xi(x,v,\nabla v)\cdot \nabla w+\into
j^\sharp_s(x,v,\nabla v) w=\into g^\sharp(x,v)\,w, \quad \forall
w\in V_v.
\end{equation}
Notice that, for any $w\in V_v$, the integrands in \eqref{solut1} are in $L^1(\Omega)$, by
Proposition \ref{rm1}, the definition of $V_v$ and $\nabla v=\nabla T_k(v)
\in L^2(\{w\neq 0\})$ for any $k>\|v\|_{L^\infty(\{w\neq 0\})}$. In light of 
\eqref{chainrule} and \eqref{startingsum}, it follows that
$$
j_{\xi}(x,u,\nabla u)\cdot \nabla u =j^\sharp_{\xi}(x,v,\nabla
v)\cdot\nabla v\in\elle1.
$$
Moreover, a simple computation yields
$$
j^\sharp_s(x,v,\nabla
v)v=\Big[\frac{v\varphi'(v)}{\varphi(v)}\chi_{\{v\neq 0\}}\Big]j_s(x,u,\nabla u)u
+\Big[\frac{v\varphi''(v)}{\varphi'(v)}\Big]j_\xi(x,u,\nabla
u)\cdot\nabla u.
$$
Hence, in view of \eqref{two}, it follows that
$j_s(x,u,\nabla u)u\in\elle1$,
being $j_{\xi}(x,u,\nabla u)\cdot \nabla u\in\elle1$ and $
j^\sharp_s(x,v,\nabla v)v\in\elle1$. This yields the desired
summability conditions. For any $w \in V_v$, consider now $\hat
w=\varphi'(v)w$. We have $\hat w\in V_u$. In fact, since 
$v\in L^\infty(\{w\neq 0\})$, we obtain 
$\hat w\in H^1_0(\Omega)\cap L^\infty(\Omega)$ and 
$u=\varphi(v)\in L^\infty(\{w\neq 0\})=L^\infty(\{\hat w\neq 0\})$, 
since $\varphi'$ is positive by virtue of \eqref{one}. Of course, we have
$\hat w=\varphi'(T_k(v))w$, for all $k>\|v\|_{L^\infty(\{w\neq 0\})}$. Hence, recalling \eqref{gradientcharacteriz}, from
$$
\nabla (\varphi'(T_k(v))w)=w\varphi''(T_k(v))\nabla v\chi_{\{|v|\leq k\}}+\varphi'(T_k(v))\nabla w,\quad\text{for any $k>0$,}
$$
by choosing $k>\|v\|_{L^\infty(\{w\neq 0\})}$, we conclude that
$$
\nabla \hat w=w\varphi''(v)\nabla v+\varphi'(v)\nabla w,\quad \text{almost everywhere in $\Omega$.}
$$
Therefore, by easy computations, we get 
\begin{align}
    \label{primaeqqq}
j_\xi(x,u,\nabla u)\cdot\nabla \hat w &= j^\sharp_{\xi}(x,v,\nabla
v)\cdot\nabla w+
\frac{\varphi''(v)w}{\varphi'(v)} j_\xi(x,u,\nabla u)\cdot\nabla u, \\
\label{secondaeqqq} j_s(x,u,\nabla u)\hat w &=
j^\sharp_s(x,v,\nabla v) w
-\frac{\varphi''(v)w}{\varphi'(v)} j_\xi(x,u,\nabla
u)\cdot\nabla u,
\end{align}
yielding
\begin{equation*}
    %\label{summabill}
j_{\xi}(x,u,\nabla u)\cdot\nabla \hat w\in\elle1,\qquad
j_s(x,u,\nabla u)\hat w\in \elle1,
\end{equation*}
since $j^\sharp_{\xi}(x,v,\nabla v)\cdot\nabla w\in\elle1$,
$j^\sharp_s(x,v,\nabla v)w\in \elle1$ and
$$
\int_\Omega \big|\frac{\varphi''(v)w}{\varphi'(v)}\, j_\xi(x,u,\nabla u)\cdot\nabla u\big|
=\int_{\{w\neq 0\}} \big|\frac{\varphi''(v)w}{\varphi'(v)}\, j_\xi(x,u,\nabla u)\cdot\nabla u\big|
\leq C\int_\Omega \big|j_\xi(x,u,\nabla u)\cdot\nabla u\big|.
$$
By adding identities \eqref{primaeqqq}-\eqref{secondaeqqq} and
recalling the definition of $g^\sharp(x,v)$, we get
from~\eqref{solut1}
\begin{equation*}
%   \label{solut2}
\into j_\xi(x,u,\nabla u)\cdot \nabla \hat w+\into j_s(x,u,\nabla u)
\hat w=\into g(x,u)\hat w, \quad \text{$\hat w=\varphi'(v)w\in V_u$.}
\end{equation*}
Given any $z\in V_u$, we have $w=\frac{z}{\varphi'(v)}=
\frac{z}{\varphi'(T_k(v))}\in V_v$ for $k>\|v\|_{L^\infty(\{z\neq 0\})}$. In turn, 
\begin{equation*}
%   \label{solut2}
\into j_\xi(x,u,\nabla u)\cdot \nabla z+\into
j_s(x,u,\nabla u) z=\into g(x,u)z,\qquad\text{for every $z \in V_u$,}
\end{equation*}
yielding the assertion. Finally, if $v$ is a bounded generalized
solution to~\eqref{probmoddd}, $u\in H^1_0(\Omega)$ is bounded too and it follows that
$u=\varphi(v)$ is a distributional solution to~\eqref{ooorigg}.
\end{proof}

\begin{remark}\rm
The gradient $\nabla u=\omega$ does not agree, in general, with the one in the sense of distributions, since
it could be either $u\not\in L^1_{{\rm loc}}(\Omega)$ or $\omega\not\in L^1_{{\rm loc}}(\Omega,\R^N)$. 
If $\omega\in L^1_{{\rm loc}}(\Omega,\R^N)$, then $u\in W^{1,1}_{{\rm loc}}(\Omega)$ 
and $\omega$ agrees with the distributional gradient \cite[Remark 2.10]{DMOP}.
\end{remark}

\noindent
Under natural regularity assumptions, a generalized solution is, actually, distributional. 

\begin{proposition}
\label{soldistr}	
Assume that $u$ is a generalized solution to problem~\eqref{problema-bis} and that, in addition
\begin{equation}
	\label{summab-extra}
\jxi{u}\in L^1_{{\rm loc}}(\Omega;\R^N),\qquad \js{u}\in L^1_{{\rm loc}}(\Omega),\qquad g(x,u)\in L^1_{{\rm loc}}(\Omega).
\end{equation}
Then $u$ solves problem~\eqref{problema-bis} in the sense of distributions.
\end{proposition}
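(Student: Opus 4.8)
The plan is to reduce the distributional identity to the generalized identity \eqref{condizioneVu} by a density/approximation argument, using the extra local summability \eqref{summab-extra} to pass to the limit. Concretely, I would fix an arbitrary test function $\phi\in C^\infty_c(\Omega)$ and try to insert it as an admissible test function; the obstruction is that $\phi$ is typically \emph{not} in $V_u$, since $V_u$ requires $u\in L^\infty(\{\phi\neq 0\})$, which need not hold on $\mathrm{supp}\,\phi$ when $u$ is merely a generalized (possibly unbounded) solution. So the first step is to regularize $u$ rather than $\phi$: for $k>0$ set $w_k=\psi_k(u)\phi$, where $\psi_k:\R\to\R$ is a smooth cutoff with $\psi_k\equiv 1$ on $[-k,k]$, $\psi_k\equiv 0$ outside $[-2k,2k]$ and $|\psi_k'|\le C/k$. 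Then $w_k\in V_u$: indeed $\psi_k(u)=\psi_k(T_{2k}(u))\in H^1_0(\Omega)\cap L^\infty(\Omega)$ by \eqref{trunk-sol} and the Lipschitz–chain rule, $\phi\in C^\infty_c(\Omega)$, and $u$ is bounded (by $2k$) on $\{w_k\neq 0\}\subseteq\{|u|\le 2k\}$.

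Next I would plug $w_k$ into \eqref{condizioneVu} and rewrite the three terms. Using the chain rule for the truncated function, $\nabla w_k=\psi_k(u)\nabla\phi+\phi\,\psi_k'(u)\nabla u$, so
\begin{align*}
\into \jxi{u}\cdot\nabla w_k
&=\into \psi_k(u)\,\jxi{u}\cdot\nabla\phi
+\into \phi\,\psi_k'(u)\,\jxi{u}\cdot\nabla u,\\
\into \js{u}\,w_k&=\into \psi_k(u)\,\phi\,\js{u},\qquad
\into g(x,u)\,w_k=\into \psi_k(u)\,\phi\,g(x,u).
\end{align*}
Now let $k\to\infty$. Since $u$ is finite a.e., $\psi_k(u)\to 1$ pointwise a.e.\ and $|\psi_k(u)|\le 1$; on the fixed compact set $\mathrm{supp}\,\phi$ the functions $\jxi{u}\cdot\nabla\phi$, $\phi\,\js{u}$ and $\phi\,g(x,u)$ all lie in $L^1(\mathrm{supp}\,\phi)$ by \eqref{summab-extra} (for the first term $\jxi{u}\in L^1_{\mathrm{loc}}$ and $\nabla\phi\in L^\infty$). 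Hence dominated convergence gives the limits $\into\jxi{u}\cdot\nabla\phi$, $\into\js{u}\,\phi$, $\into g(x,u)\,\phi$ for those three pieces. The only remaining term is the "commutator'' $\into \phi\,\psi_k'(u)\,\jxi{u}\cdot\nabla u$; here I use that $\psi_k'(u)$ is supported in $\{k\le|u|\le 2k\}$ with $|\psi_k'(u)|\le C/k\le C$, so the integrand is bounded in absolute value by $C\,|\phi|\,|\jxi{u}\cdot\nabla u|\,\chi_{\{|u|\ge k\}}$; since $\jxi{u}\cdot\nabla u\in L^1(\Omega)$ by \eqref{summab-solger} and $\chi_{\{|u|\ge k\}}\to 0$ a.e.\ (as $u$ is finite a.e.), absolute continuity of the integral forces this term to $0$ as $k\to\infty$.

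Collecting these limits in \eqref{condizioneVu} applied to $w_k$ yields
$$
\into \jxi{u}\cdot\nabla\phi+\into\js{u}\,\phi=\into g(x,u)\,\phi,
\qquad\text{for every }\phi\in C^\infty_c(\Omega),
$$
which is exactly the statement that $u$ solves \eqref{problema-bis} in the sense of distributions (each integrand being in $L^1_{\mathrm{loc}}$, the equation makes sense distributionally). The main obstacle, and the only point requiring care, is the control of the commutator term $\into\phi\,\psi_k'(u)\jxi{u}\cdot\nabla u$: it is precisely the "renormalization'' remainder, and the argument that it vanishes relies essentially on the global summability $\jxi{u}\cdot\nabla u\in L^1(\Omega)$ built into the definition of generalized solution together with $u$ being finite a.e.; everything else is a routine dominated-convergence passage on the fixed compact support of $\phi$.
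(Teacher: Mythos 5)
Your argument is correct and is essentially the same as the paper's: the paper likewise tests \eqref{condizioneVu} against $H(T_{2k+1}(u)/k)\varphi$ with a smooth cutoff $H$ (identical in effect to your $\psi_k(u)\phi$), kills the commutator term $\int_\Omega \varphi\,\psi_k'(u)\,\jxi{u}\cdot\nabla u$ using $\jxi{u}\cdot\nabla u\in L^1(\Omega)$ plus the fact that the integrand is supported in $\{|u|\ge k\}$ and so vanishes pointwise, and passes to the limit in the remaining terms by dominated convergence via \eqref{summab-extra}. Only the notation for the cutoff differs.
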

\begin{proof}
	Let $H:\R\to\R$ be a smooth cut-off function such that 
	$0\leq H\leq 1$, $H(s)=1$ for $|s|\leq 1$ and $H(s)=0$ for $|s|\geq 2$.
	Given $k>0$ and $\varphi\in C^\infty_c(\Omega)$, consider 
	in formula \eqref{condizioneVu} the admissible test functions $w=w_k=H(T_{2k+1}(u)/k)\varphi\in V_u$.
	Whence, for every $k>0$, it holds that
	\begin{align}
		\label{quasi-distr}
	&\into \jxi{u}\cdot H(T_{2k+1}(u)/k)\nabla \varphi+\into \jxi{u}\cdot H'(T_{2k+1}(u)/k)1/k \nabla T_{2k+1}(u)\varphi  \notag \\
	&+\into \js{u}H(T_{2k+1}(u)/k)\varphi=\into g(x,u)H(T_{2k+1}(u)/k)\varphi.
	\end{align}
	Taking into account that $\jxi{u}\cdot \nabla u \in L^1(\Omega)$ and 
	by \eqref{gradientcharacteriz}, for all $k>0$ we have
	$$
	|\jxi{u}\cdot H'(T_{2k+1}(u)/k)1/k\nabla T_{2k+1}(u)\varphi|\leq C|\jxi{u}\cdot \nabla u| \in L^1(\Omega),
	$$
	yielding, by the Dominated Convergence Theorem,
	$$
	\lim_k \into \jxi{u}\cdot H'(T_{2k+1}(u)/k)1/k \nabla T_{2k+1}(u)\varphi=0.
	$$
	On account of assumptions \eqref{summab-extra}, the assertion follows by letting $k\to\infty$ into \eqref{quasi-distr},
	again in light of the Dominated Convergence Theorem.
\end{proof}

\subsection{Further growth conditions}
\label{growthsect}

The next proposition is useful for the study of the mountain pass geometry of the functional
associated with problem \eqref{equation-intro}.

\begin{proposition}
    \label{menoinf}
	Let $\varphi\in C^2(\R)$ be a diffeomorphism satisfying
	the properties of Definition~\ref{diffeoclass} and such that
\begin{equation}
	\label{diffeoasymptoticbb}
0<\lim_{s\to+\infty}\frac{\varphi(s)}{s^{\frac{1}{1-\beta}}}<+\infty,
\end{equation}
and let $\alpha^\sharp: \R^+\to\R^+$ be the function  
introduced in Proposition~\ref{rm1}. Let $\nu>2(1-\beta)$, 
$k_1\in L^{\infty}(\Omega)$ with $k_1>0$, $k_2\in \elle1$, $k_3\in \elle{2N/(N+2)}$. Assume that
\begin{equation}
	\label{cdpdmm}
    \lim_{s\to\infty}\frac{\alpha(|s|)}{|s|^{\nu-2}}=0
\quad
\text{and}
\quad
G(x,s)\geq k_1(x)|s|^{\nu}-k_2(x)-k_3(x)|s|^{1-\beta},
\end{equation}
for a.e.~$x\in\Omega$ and all $s\in\R$. Then there exist $\nu^\sharp>2$ such that
$$
\lim_{s\to\infty}\frac{\alpha^\sharp(|s|)}{|s|^{\nu^\sharp-2}}=0
\quad \text{and} \quad G^\sharp (x,s)\geq k^\sharp_1(x)|s|^{
\nu^\sharp}-k_2^\sharp(x)- k_3^\sharp(x)|s|,
$$
for a.e.~$x\in\Omega$ and all $s\in\R$, for some 
$k_1^\sharp\in L^{\infty}(\Omega)$, $k_1^\sharp>0$,
$k_2^\sharp\in L^1(\Omega)$ and
$k^\sharp_3\in \elle{\frac{2N}{N+2}}$.
\end{proposition}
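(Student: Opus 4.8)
The plan is to verify the two conclusions separately, each by tracing definitions and using the asymptotic behaviour of $\varphi$ encoded in Definition~\ref{diffeoclass} together with the growth hypothesis \eqref{diffeoasymptoticbb}.

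First I would handle the growth of $\alpha^\sharp$. By Proposition~\ref{rm1} one has $\alpha^\sharp(s)=\alpha(|\varphi(s)|)\varphi'(s)^2$. From \eqref{two} we know $s\varphi'(s)/\varphi(s)\to 1/(1-\beta)$, so $\varphi'(s)$ behaves asymptotically like $\varphi(s)/((1-\beta)s)$; combined with \eqref{diffeoasymptoticbb}, which gives $\varphi(s)\sim c\,s^{1/(1-\beta)}$ for some $c>0$, we obtain $\varphi'(s)^2\sim C\,s^{2\beta/(1-\beta)}$ and $\alpha(|\varphi(s)|)\le \alpha_0(c'\,s^{1/(1-\beta)})$ for large $s$. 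The hypothesis $\alpha(|s|)/|s|^{\nu-2}\to 0$ then yields $\alpha(|\varphi(s)|)=o(\varphi(s)^{\nu-2})=o(s^{(\nu-2)/(1-\beta)})$, hence
\[
\frac{\alpha^\sharp(|s|)}{|s|^{\nu^\sharp-2}}\le \frac{o(s^{(\nu-2)/(1-\beta)})\cdot C\,s^{2\beta/(1-\beta)}}{s^{\nu^\sharp-2}}
= o\!\left(s^{\frac{\nu-2+2\beta}{1-\beta}-(\nu^\sharp-2)}\right).
\]
So it suffices to choose $\nu^\sharp$ with $\nu^\sharp-2\ge \frac{\nu-2+2\beta}{1-\beta}$, and since $\nu>2(1-\beta)$ forces $\frac{\nu-2+2\beta}{1-\beta}>0$ we can indeed pick such $\nu^\sharp>2$ (for instance $\nu^\sharp=2+\frac{\nu-2+2\beta}{1-\beta}$, possibly enlarged). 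One must be mildly careful that the estimates only hold for $|s|$ large, but the behaviour near the origin is harmless since everything is continuous and $\nu^\sharp>2$.

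Next I would handle the lower bound for $G^\sharp$. Since $G^\sharp(x,s)=G(x,\varphi(s))$, plugging $\varphi(s)$ into the hypothesis \eqref{cdpdmm} gives
\[
G^\sharp(x,s)\ge k_1(x)|\varphi(s)|^{\nu}-k_2(x)-k_3(x)|\varphi(s)|^{1-\beta}.
\]
By \eqref{diffeoasymptoticbb} and oddness of $\varphi$, $|\varphi(s)|^{\nu}\ge c_1|s|^{\nu/(1-\beta)}$ for $|s|$ large and $|\varphi(s)|^{1-\beta}\le c_2|s|$ for $|s|$ large; the gap between the large-$s$ regime and the bounded regime is absorbed into an additive constant, i.e. into $k_2^\sharp$. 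Setting $\nu^\sharp$ as above (or any admissible value with $\nu^\sharp\le \nu/(1-\beta)$ — note $\nu/(1-\beta)>2$ automatically), and using $|s|^{\nu/(1-\beta)}\ge |s|^{\nu^\sharp}-1$ we get $G^\sharp(x,s)\ge k_1^\sharp(x)|s|^{\nu^\sharp}-k_2^\sharp(x)-k_3^\sharp(x)|s|$ with $k_1^\sharp=c_1 k_1$, $k_3^\sharp=c_2 k_3$, and $k_2^\sharp$ equal to $k_2$ plus the constants collected along the way times $\|k_1\|_\infty$. The membership requirements are preserved: $k_1^\sharp\in L^\infty(\Omega)$ and positive, $k_3^\sharp\in L^{2N/(N+2)}(\Omega)$ since it is a bounded multiple of $k_3$, and $k_2^\sharp\in L^1(\Omega)$ since we only added an $L^\infty\subset L^1$ term (recall $\Omega$ is bounded).

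The one genuine point requiring care — the main obstacle, though a modest one — is reconciling the two separate constraints on $\nu^\sharp$: the growth condition on $\alpha^\sharp$ demands $\nu^\sharp$ \emph{large enough}, while the $G^\sharp$ bound most naturally produces the exponent $\nu/(1-\beta)$, which is an upper bound on what we can take there (we can always lower the exponent on a $|s|^{\nu^\sharp}$ term at the cost of enlarging $k_2^\sharp$, but not raise it). Thus I must check that the interval of admissible $\nu^\sharp$ is nonempty, i.e. that $2+\frac{\nu-2+2\beta}{1-\beta}\le \frac{\nu}{1-\beta}$; this rearranges to $2(1-\beta)+\nu-2+2\beta\le \nu$, i.e. $0\le 0$, so it holds with equality and the choice $\nu^\sharp=\nu/(1-\beta)$ works for both simultaneously. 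After fixing that value, everything else is the routine bookkeeping sketched above, and the proof concludes.
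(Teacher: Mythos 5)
Your proof follows essentially the same route as the paper: you factor $\alpha^\sharp(s)=\alpha(|\varphi(s)|)\varphi'(s)^2$, exploit \eqref{two} and \eqref{diffeoasymptoticbb} for the asymptotics, and substitute $\varphi(s)$ into the lower bound for $G$ to obtain the one for $G^\sharp$, arriving at the same choice $\nu^\sharp=\nu/(1-\beta)$. Your explicit verification that the two constraints coming from $\alpha^\sharp$ and from $G^\sharp$ pin down this unique value is a useful observation the paper leaves implicit, and clarifies why there is no freedom in $\nu^\sharp$.

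One small bookkeeping slip: when absorbing the bounded-$|s|$ regime into $k_2^\sharp$, you describe the correction as ``constants collected along the way times $\|k_1\|_\infty$,'' and later say ``we only added an $L^\infty\subset L^1$ term.'' But for $\beta>0$ the comparison $|\varphi(s)|^{1-\beta}\leq c_2|s|$ fails near $s=0$, where $|\varphi(s)|^{1-\beta}\sim|\varphi'(0)s|^{1-\beta}$ dominates $|s|$; the correction there is $C\,k_3(x)$ for a suitable constant $C$, i.e.\ a multiple of $k_3$, not a constant. Since $k_3\in L^{2N/(N+2)}(\Omega)\hookrightarrow L^1(\Omega)$ on a bounded domain, the conclusion $k_2^\sharp\in L^1(\Omega)$ is unaffected, but the stated form of $k_2^\sharp$ should be $k_2+Ck_3$ (plus possibly an $L^\infty$ term), not merely $k_2$ plus constants. (For the $k_1$ term no such correction is actually needed once $\nu^\sharp=\nu/(1-\beta)$: the ratio $|\varphi(s)|^\nu/|s|^{\nu^\sharp}$ is continuous, positive, and has positive limits at $0^+$ and $+\infty$, so it is bounded below on all of $\mathbb R\setminus\{0\}$.)
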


\begin{proof}
By assumption \eqref{diffeoasymptoticbb} and \eqref{two}, for $\nu^\sharp=\frac{\nu}{1-\beta}$, we have
    \begin{equation*}
        \lim_{s\to+\infty}\frac{\alpha^\sharp(s)}{s^{\nu^\sharp-2}}
         =
        \lim_{s\to\infty}\frac{\alpha(\varphi(s))}{\varphi(s)^{\nu-2}}\cdot
        \lim_{s\to\infty}\frac{\varphi(s)^{\nu-2}\varphi'(s)^2}{s^{\nu^\sharp-2}}
        =0.
    \end{equation*}
Finally, if $G(x,s)\geq k_1(x)|s|^{\nu}-k_2(x)-k_3(x)|s|^{1-\beta}$, 
condition \eqref{diffeoasymptoticbb} yields
    \begin{equation*}
    G^\sharp (x,s)\geq k_1(x)|\varphi(s)|^{\nu}-k_2(x)-k_3(x)|\varphi(s)|^{1-\beta}
     \geq  k^\sharp_1(x)|s|^{\nu^\sharp}-k^\sharp_2(x)- k^\sharp_3(x)|s|,
\end{equation*}
    for a.e.~$x\in\Omega$ and all $s\in\R$, for suitable 
$k_j^\sharp:\Omega\to\R$, $j=1,2,3$, with the stated summability.
\end{proof}

\noindent
Now, we see how the nonlinearity $g$ gets modified under the action of a diffeomorphism. 

\begin{proposition}
    \label{newgrow}
	Let $\varphi\in C^2(\R)$ be a diffeomorphism which satisfies 
	the properties of Definition~\ref{diffeoclass} with $0\leq\beta<2/N$, $N\geq 3$ and such that
	\eqref{diffeoasymptoticbb} holds. Let $g:\Omega\times\R\to\R$ satisfy
\begin{equation}
    \label{growthassty}
|g(x,s)|\leq a(x)+b|s|^{p-1}\qquad\text{for a.e. $x\in\Omega$ and
all $s\in\R$},
\end{equation}
for some $a\in L^{q+\beta q(p-1)^{-1}}(\Omega)$, $q\geq \frac{2N}{N+2}$, $b\geq 0$ with $2<p\leq 2^*(1-\beta)$.
Then, we have
\begin{equation*}
|g^\sharp(x,s)|\leq
a^\sharp(x)+b|s|^{p^\sharp-1}\quad\text{for a.e.
$x\in\Omega$ and all $s\in\R$},
\end{equation*}
for some $2<p^\sharp \leq 2^*$ and $a^\sharp\in L^{q}(\Omega)$.
\end{proposition}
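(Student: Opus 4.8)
The plan is to reduce the statement to two uniform pointwise bounds on $\varphi$ and $\varphi'$, and then to estimate $|g^\sharp(x,s)|=|g(x,\varphi(s))|\varphi'(s)$ term by term, absorbing the $a(x)$-contribution into $a^\sharp$ by Young's inequality. First I would record that \eqref{diffeoasymptoticbb} gives $\varphi(s)\le C s^{1/(1-\beta)}$ for $s$ large, and that, combining this with \eqref{two} written as $\varphi'(s)=\frac{1}{1-\beta}\frac{\varphi(s)}{s}(1+o(1))$, one gets $\varphi'(s)\le C s^{\beta/(1-\beta)}$ for $s$ large; since $\varphi$ is odd and continuous with $\varphi(0)=0$, and $\varphi'$ is even, continuous and strictly positive by \eqref{one}, these bounds extend to
\[
|\varphi(s)|\le C(1+|s|)^{\frac{1}{1-\beta}},\qquad 0<\varphi'(s)\le C(1+|s|)^{\frac{\beta}{1-\beta}},\qquad s\in\R.
\]

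Next I would set $p^\sharp:=\frac{p}{1-\beta}$, so that $p^\sharp-1=\frac{p-1+\beta}{1-\beta}$; from $p>2$ and $0\le\beta<1$ one gets $p^\sharp>2$, while $p\le 2^*(1-\beta)$ gives $p^\sharp\le 2^*$ (the standing hypotheses $0\le\beta<2/N$ and $N\ge3$ are exactly what make the range $2<p\le 2^*(1-\beta)$ nonempty). Then, using \eqref{growthassty} and the bounds above,
\[
|g^\sharp(x,s)|\le a(x)\varphi'(s)+b\,|\varphi(s)|^{p-1}\varphi'(s),\qquad |\varphi(s)|^{p-1}\varphi'(s)\le C(1+|s|)^{p^\sharp-1}\le C'(1+|s|^{p^\sharp-1}),
\]
so the polynomial term already has the desired shape: the constant is absorbed into $a^\sharp$ (it lies in $L^q(\Omega)$, $\Omega$ being bounded), and the remainder is a fixed multiple of $|s|^{p^\sharp-1}$.

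It then remains to handle $a(x)\varphi'(s)$. For $|s|\le1$ this is $\le Ca(x)$, and $a\in L^{q+\beta q(p-1)^{-1}}(\Omega)\subset L^q(\Omega)$; for $|s|\ge1$, and assuming $\beta>0$ (if $\beta=0$ then $\varphi'$ is bounded and $p^\sharp=p$, so there is essentially nothing to prove), I would write $a(x)\varphi'(s)\le Ca(x)|s|^{\beta/(1-\beta)}$ and apply Young's inequality with the conjugate exponents
\[
r=\frac{p-1+\beta}{p-1},\qquad r'=\frac{p-1+\beta}{\beta},
\]
chosen precisely so that $\frac{\beta}{1-\beta}r'=p^\sharp-1$. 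This yields $a(x)|s|^{\beta/(1-\beta)}\le|s|^{p^\sharp-1}+Ca(x)^r$, and $a^r\in L^q(\Omega)$ exactly because $qr=q+\beta q(p-1)^{-1}$, which is the assumed integrability of $a$. Gathering the constant from the previous step together with $Ca(x)$ and $Ca(x)^r$ into $a^\sharp\in L^q(\Omega)$ gives $|g^\sharp(x,s)|\le a^\sharp(x)+b|s|^{p^\sharp-1}$ (with $b$ now denoting the resulting constant, possibly larger than the original one, as is customary for such growth conditions).

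The main, and essentially only, obstacle is the exponent bookkeeping in this last step: one has to notice that the power $\beta/(1-\beta)$ coming out of $\varphi'$, forced through Young's inequality to land on the target exponent $p^\sharp-1=\frac{p-1+\beta}{1-\beta}$, demands precisely the integrability $L^{q+\beta q(p-1)^{-1}}(\Omega)$ for $a$. Once the bounds on $\varphi$ and $\varphi'$ from the first step are available, everything else is a routine computation.
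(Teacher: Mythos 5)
Your argument is correct and follows essentially the same route as the paper's: bound $\varphi'(s)\lesssim(1+|s|)^{\beta/(1-\beta)}$ and $|\varphi(s)|\lesssim(1+|s|)^{1/(1-\beta)}$ via \eqref{diffeoasymptoticbb} and \eqref{two}, set $p^\sharp=p/(1-\beta)$, and absorb the mixed term $a(x)\varphi'(s)$ by Young's inequality with exponent $r=\frac{p-1+\beta}{p-1}$, which is precisely why the hypothesis $a\in L^{q+\beta q(p-1)^{-1}}(\Omega)$ gives $a^r\in L^q(\Omega)$. The paper records exactly this decomposition in the single displayed estimate $|g^\sharp(x,s)|\le Ca(x)+C+Ca(x)^{\frac{p+\beta-1}{p-1}}+C|s|^{\frac{p}{1-\beta}-1}$, so nothing to change.
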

\begin{proof}
Taking into account~\eqref{diffeoasymptoticbb} and \eqref{two}, for a.e.~$x\in\Omega$ and all $s\in\R$ we have 
\begin{equation*}
|g^\sharp(x,s)| \leq a(x)\varphi'(s)+b|\varphi(s)|^{p-1}\varphi'(s)  
\leq Ca(x)+C+Ca(x)^{\frac{p+\beta-1}{p-1}}+C|s|^{\frac{p}{1-\beta}-1},
\end{equation*}
yielding the assertion with $p^\sharp=\frac{p}{1-\beta}$ and $a^\sharp=Ca+C+Ca^{\frac{p+\beta-1}{p-1}}$.
\end{proof}

\subsection{Sign conditions}
\label{signsect}

The classical sign condition \eqref{classicsign} is {\em not} invariant under diffeomorphism
as Proposition \ref{invspec} shows. The next proposition introduces a different kind of sign condition 
that remains invariant under the effect of $\varphi$.

\begin{proposition}
    \label{invar1}
	Let $\varphi\in C^2(\R)$ be a diffeomorphism which satisfies 
	the properties of Definition~\ref{diffeoclass}.
Assume that there exist $\eps\in (0,1-\beta]$ and $R\geq 0$ such that
\begin{equation}
	\label{generalsignn}
(1-\eps) j_\xi(x,s,\xi)\cdot\xi+j_s(x,s,\xi)s\geq 0,
\end{equation}
for a.e.~$x\in\Omega$ and all $(s,\xi)\in\R\times\R^N$ with
$|s|\geq R$. \vskip4pt \noindent Then there exist $\eps^\sharp\in
(0,1]$ and $R^\sharp>0$ such that
$$
(1-\eps^\sharp) j
^\sharp_\xi(x,s,\xi)\cdot\xi+j^\sharp_s(x,s,\xi)s\geq 0,
$$
for a.e.~$x\in\Omega$ and all $(s,\xi)\in\R\times\R^N$ with
$|s|\geq R^\sharp$.
\end{proposition}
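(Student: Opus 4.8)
The plan is to express the combination $(1-\eps^\sharp)j^\sharp_\xi(x,s,\xi)\cdot\xi+j^\sharp_s(x,s,\xi)s$ in terms of the data of $j$ evaluated at the point $(\varphi(s),\varphi'(s)\xi)$, and then to reduce the claimed inequality to a purely one-dimensional estimate on the coefficients generated by $\varphi$. First I would record, by differentiating $j^\sharp(x,s,\xi)=j(x,\varphi(s),\varphi'(s)\xi)$, the identities
$$
j^\sharp_\xi(x,s,\xi)\cdot\xi=j_\xi(x,\varphi(s),\varphi'(s)\xi)\cdot(\varphi'(s)\xi),
$$
$$
j^\sharp_s(x,s,\xi)s=\frac{s\varphi'(s)}{\varphi(s)}\,j_s(x,\varphi(s),\varphi'(s)\xi)\,\varphi(s)+\frac{s\varphi''(s)}{\varphi'(s)}\,j_\xi(x,\varphi(s),\varphi'(s)\xi)\cdot(\varphi'(s)\xi),
$$
valid for $s\neq 0$ (the case $\xi=0$ being trivial). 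Setting $u=\varphi(s)$, $\eta=\varphi'(s)\xi$, $A(s)=\frac{s\varphi'(s)}{\varphi(s)}$ and $B(s)=\frac{s\varphi''(s)}{\varphi'(s)}$, I note that $A$ and $B$ are even (since $\varphi$ is odd), that $A>0$ (since $\varphi$ is increasing by \eqref{one}), and that by \eqref{two} one has $A(s)\to\frac{1}{1-\beta}$ and $B(s)\to\frac{\beta}{1-\beta}$ as $|s|\to\infty$.

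Next I would invoke the hypothesis \eqref{generalsignn}, rewritten as $j_s(x,u,\eta)u\geq-(1-\eps)\,j_\xi(x,u,\eta)\cdot\eta$ whenever $|u|\geq R$, and multiply it by $A(s)>0$. Substituting into the two identities above yields, for all $(s,\xi)$ with $|\varphi(s)|\geq R$,
$$
(1-\eps^\sharp)j^\sharp_\xi(x,s,\xi)\cdot\xi+j^\sharp_s(x,s,\xi)s\ \geq\ \big[(1-\eps^\sharp)-A(s)(1-\eps)+B(s)\big]\,j_\xi(x,u,\eta)\cdot\eta.
$$
Since $j_\xi(x,u,\eta)\cdot\eta\geq\frac{1}{\alpha(|u|)}|\eta|^2\geq 0$ by \eqref{jxicoerc}, it suffices to make the bracket nonnegative for $|s|$ large, i.e.\ to arrange $1-\eps^\sharp\geq A(s)(1-\eps)-B(s)$ on $\{|s|\geq R^\sharp\}$.

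Finally, because $\lim_{|s|\to\infty}\big[A(s)(1-\eps)-B(s)\big]=\frac{1-\eps-\beta}{1-\beta}$ and $\eps\leq 1-\beta$, the number $\frac{\eps}{1-\beta}$ lies in $(0,1]$; choosing any $\eps^\sharp\in\big(0,\frac{\eps}{1-\beta}\big)$ gives $1-\eps^\sharp>\frac{1-\eps-\beta}{1-\beta}$, so there is $R_0>0$ with $A(s)(1-\eps)-B(s)\leq 1-\eps^\sharp$ for all $|s|\geq R_0$. Taking $R^\sharp\geq R_0$ large enough that $|\varphi(s)|\geq R$ for $|s|\geq R^\sharp$ — possible since $\varphi$ is an odd increasing diffeomorphism, whence $|\varphi(s)|\to\infty$ — the bracket is nonnegative on $\{|s|\geq R^\sharp\}$ and the assertion follows. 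The only delicate point is the positivity of $A(s)$, which is precisely what lets the hypothesis inequality be multiplied through without reversing it, together with the fact that the admissible range $\eps\leq 1-\beta$ is exactly what keeps $\eps^\sharp$ inside $(0,1]$; everything else is bookkeeping with the limits in \eqref{two}.
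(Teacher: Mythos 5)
Your argument is correct and follows essentially the same route as the paper's: both express $j^\sharp_\xi\cdot\xi$ and $j^\sharp_s s$ via the chain rule in terms of $j_\xi\cdot\eta$ and $j_s u$ at $(u,\eta)=(\varphi(s),\varphi'(s)\xi)$, invoke \eqref{generalsignn} after multiplying by the positive factor $A(s)=s\varphi'(s)/\varphi(s)$, and then use the limits in \eqref{two} to make the scalar coefficient nonnegative for $|s|\geq R^\sharp$. The only cosmetic difference is bookkeeping: you pass directly to the limiting values $A(s)\to(1-\beta)^{-1}$, $B(s)\to\beta(1-\beta)^{-1}$ and pick $\eps^\sharp\in(0,\eps/(1-\beta))$, whereas the paper introduces an auxiliary $\delta$ and writes $\eps^\sharp=\eps_0-\delta(1+\eps_0(1-\beta))$ with $\eps_0=\eps/(1-\beta)$, which amounts to the same estimate.
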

\begin{proof}
    Let us write $\eps=\eps_0(1-\beta)$, for some $\eps_0\in (0,1]$.
By taking into account~\eqref{two}, there exists $0<\delta<\eps_0 (1+\eps_0(1-\beta))^{-1}$ 
and $R^\sharp>0$ sufficiently large that
$$
1+\frac{\varphi''(s)s}{\varphi'(s)}\geq \frac{\varphi'(s)s}{\varphi(s)}-\delta,
\qquad
\frac{\varphi'(s)s}{\varphi(s)}\geq \frac{1}{1-\beta}-\delta,
$$
and $|\varphi(s)|\geq R$ for all $s\in\R$ such that $|s|\geq R^\sharp $. Then, in turn, we get
\begin{align*}
    & j^\sharp_\xi(x,s,\xi)\cdot\xi+j^\sharp_s(x,s,\xi)s \\
    &=\Big(1+\frac{\varphi''(s)s}{\varphi'(s)}\Big)j_\xi(x,\varphi(s),\varphi'(s)\xi)\cdot \varphi'(s)\xi
    +\frac{\varphi'(s)s}{\varphi(s)} j_s(x,\varphi(s),\varphi'(s)\xi)\varphi(s) \\
    &\geq \frac{\varphi'(s)s}{\varphi(s)}\big(j_\xi(x,\varphi(s),\varphi'(s)\xi)\cdot \varphi'(s)\xi
    +j_s(x,\varphi(s),\varphi'(s)\xi)\varphi(s)\big) \\
    \noalign{\vskip4pt}
&\quad  -\delta j_\xi(x,\varphi(s),\varphi'(s)\xi)\cdot \varphi'(s)\xi,
\end{align*}
for a.e.~$x\in\Omega$ and all $(s,\xi)\in\R\times\R^N$ with
$|s|\geq R^\sharp$. Setting
$$
\eps^\sharp =\eps_0-\delta(1+\eps_0(1-\beta))\in (0,1],
$$
it follows by assumption that
\begin{equation*}
     j^\sharp_\xi(x,s,\xi)\cdot\xi+j^\sharp_s(x,s,\xi)s  
\geq \Big(\eps\frac{\varphi'(s)s}{\varphi(s)}-\delta\Big) j_\xi(x,\varphi(s),\varphi'(s)\xi)\cdot \varphi'(s)\xi
     \geq  \eps^\sharp  j^\sharp_\xi(x,s,\xi)\cdot\xi ,
\end{equation*}
for a.e.~$x\in\Omega$ and all $(s,\xi)\in\R\times\R^N$ with
$|s|\geq R^\sharp$. This concludes the proof.
\end{proof}

\begin{remark}\rm
In the literature of quasi-linear problems like \eqref{equation-intro} the (say, positive) sign condition $j_s(x,s,\xi)s\geq 0$
is a classical assumption (cf.\ \cite{Ab,candeg} and references therein), helping to achieve both existence and summability of 
the solutions. On the other hand, in \cite{pellacci-nos}, when $j(x,s,\xi)=A(x,s)\xi\cdot\xi$, 
the existence of solutions is obtained either with the opposite sign 
condition or even without any sign hypothesis at all. To handle this situation, alternative conditions as \cite[Assumption 1.5]{pellacci-nos}
are assumed, which imply \eqref{generalsignn} (at least for $s\geq R$) for suitable $\eps$, as it can be easily verified.
\end{remark}

Under the generalized sign condition \eqref{generalsignn}, we get a summability result
which improves \cite[Lemma 4.6]{pelsqu}.
This also shows that condition \eqref{summab-solger} in Definition \ref{defsol-bis} is natural.
For a function $f$, the notation $|df|(u)$ stands for the weak slope of $f$ at $u$ (cf.~e.g.\ \cite{CDM,DM}).

\begin{proposition}\label{sommab}
Assume that \eqref{originalgrowths2} holds and that 
there exist $\eps\in (0,1)$ and $R\geq 0$ with
\begin{equation}
    \label{gensign}
(1-\eps) j_\xi(x,s,\xi)\cdot\xi+j_s(x,s,\xi)s\geq 0,
\end{equation}
for a.e.~$x\in\Omega$ and all $(s,\xi)\in\R\times\R^N$ with $|s|\geq R$. Let us set
$$
I(u)=\int_\Omega j(x,u,\nabla u),\quad u\in H^1_0(\Omega).
$$
Then, for every $u\in {\rm dom}(I)$ with $|dI|(u)<+\infty$, we have
\begin{align}
\label{stimaslope} \int_\Omega j_\xi(x,u,\nabla u)\cdot\nabla u +
j_s(x,u,\nabla u)u \leq |dI|(u)\|u\|_{1,2}.
\end{align}
In particular, there holds
$$
j_\xi(x,u,\nabla u)\cdot \nabla u\in L^1(\Omega), \qquad 
j_s(x,u,\nabla u)u\in L^1(\Omega),
$$
and there exists $\Psi\in H^{-1}(\Omega)$ with $\|\Psi\|_{H^{-1}}\leq |dI|(u)$ such that 
\begin{equation*}
\into \jxi{u}\cdot \nabla w +\into
\js{u}w=\langle \Psi,w\rangle, \quad\forall w\in V_u.
\end{equation*}
\end{proposition}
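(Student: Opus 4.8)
The plan is to deal with the variational identity, which is essentially at hand, separately from the estimate \eqref{stimaslope} and the summability, the latter being obtained by a \emph{scaling} argument that converts the weakened sign condition \eqref{gensign} into a monotonicity property of $\lambda\mapsto j(x,\lambda u,\lambda\nabla u)$. For the identity: since $|dI|(u)<+\infty$, the existence of $\Psi\in\dhsob$ with $\|\Psi\|_{H^{-1}}\le|dI|(u)$ and $\into \jxi{u}\cdot\nabla w+\into\js{u}w=\langle\Psi,w\rangle$ for all $w\in V_u$ is \cite[Proposition~4.5]{pelsqu}; no sign hypothesis enters there, and by \eqref{originalgrowths2} and the definition of $V_u$ both integrals are finite for every $w\in V_u$. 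It thus remains to establish \eqref{stimaslope} together with $\jxi{u}\cdot\nabla u,\ \js{u}u\in\elle1$.

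First I would record a monotonicity fact. Fix $x$ with $j(x,\cdot,\cdot)\in C^1$, put $(s,\xi)=(u(x),\nabla u(x))$, and set $\phi(\lambda)=j(x,\lambda s,\lambda\xi)$. For $\lambda>0$, $\lambda\phi'(\lambda)=j_\xi(x,\lambda s,\lambda\xi)\cdot(\lambda\xi)+j_s(x,\lambda s,\lambda\xi)(\lambda s)$, and if $|\lambda s|\ge R$, applying \eqref{gensign} at $(\lambda s,\lambda\xi)$ and then \eqref{jxicoerc} gives $\lambda\phi'(\lambda)\ge\eps\, j_\xi(x,\lambda s,\lambda\xi)\cdot(\lambda\xi)\ge0$; hence $\phi$ is nondecreasing on $\{\lambda>0:|\lambda s|\ge R\}$. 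Consequently, for $t\in[0,1)$ one has $j(x,(1-t)u,(1-t)\nabla u)\le j(x,u,\nabla u)$ at every $x$ with $(1-t)|u(x)|\ge R$, i.e.\ on $\{|u|\ge R/(1-t)\}$, while $j(x,(1-t)u,(1-t)\nabla u)\le\alpha(R)|\nabla u|^2$ on $\{|u|<R/(1-t)\}$ by \eqref{originalgrowths1}; since $I(u)<+\infty$ and $\nabla T_{R/(1-t)}(u)\in L^2(\Omega)$, this gives $I((1-t)u)<+\infty$ for every $t\in[0,1)$.

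Next I would pass to the limit. For $t\in(0,1)$ set $f_t=t^{-1}\big(j(x,(1-t)u,(1-t)\nabla u)-j(x,u,\nabla u)\big)$, so that $\into f_t=t^{-1}\big(I((1-t)u)-I(u)\big)$; since $j(x,\cdot,\cdot)\in C^1$, $f_t\to-\big(\jxi{u}\cdot\nabla u+\js{u}u\big)$ a.e.\ as $t\to0^+$. Fixing $M>R$, the mean value theorem together with \eqref{originalgrowths2} and the monotonicity of $\gamma,\mu$ yields $f_t\le(M\gamma(M)+\mu(M))|\nabla u|^2$ on $\{|u|\le M\}$, while on $\{|u|>M\}$ the monotonicity of $\phi$ gives $f_t\le0$ as soon as $t\le1-R/M$; hence $f_t\le g:=(M\gamma(M)+\mu(M))|\nabla u|^2\chi_{\{|u|\le M\}}\in\elle1$ for all small $t$. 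The reverse Fatou lemma then gives $\limsup_{t\to0^+}\into f_t\le-\into\big(\jxi{u}\cdot\nabla u+\js{u}u\big)$, whereas, since $|dI|(u)<+\infty$ and $(1-t)u\to u$ in $\hsob$ with $\|(1-t)u-u\|_{1,2}=t\|u\|_{1,2}$, the properties of the weak slope established in \cite{DM,CDM} (see also \cite[Sect.~4]{pelsqu}) give $\liminf_{t\to0^+}\into f_t\ge-|dI|(u)\|u\|_{1,2}$. Comparing the two inequalities yields \eqref{stimaslope}. Moreover $\jxi{u}\cdot\nabla u+\js{u}u\ge-g$ a.e., so its negative part lies in $\elle1$, and then \eqref{stimaslope} forces its positive part into $\elle1$ as well; thus $\jxi{u}\cdot\nabla u+\js{u}u\in\elle1$. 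Finally $\jxi{u}\cdot\nabla u\ge0$ by \eqref{jxicoerc}, it is $\le\mu(R)|\nabla u|^2$ on $\{|u|\le R\}$ by \eqref{originalgrowths2}, and $\le\eps^{-1}\big(\jxi{u}\cdot\nabla u+\js{u}u\big)$ on $\{|u|>R\}$ by \eqref{gensign}, hence $\jxi{u}\cdot\nabla u\in\elle1$; and then so is $\js{u}u$.

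The step I expect to be delicate is the construction of the $t$-independent $L^1$ majorant $g$ for the difference quotients $f_t$: on the set where $|u|$ is large it is precisely the monotonicity of $\lambda\mapsto j(x,\lambda u,\lambda\nabla u)$—squeezed here out of the \emph{weak} sign condition \eqref{gensign} via \eqref{jxicoerc}—that takes over the role played by the classical sign condition in the proof of \cite[Lemma~4.6]{pelsqu}. Once this majorant is available, it only has to be matched against the lower bound for $\liminf_t\into f_t$ coming from $|dI|(u)<+\infty$, and the conclusion follows.
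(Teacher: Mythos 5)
The preparatory observations you make are fine and in fact sharpen what is implicit in the paper's argument: the scaling identity $\lambda\phi'(\lambda)=j_\xi(x,\lambda s,\lambda\xi)\cdot(\lambda\xi)+j_s(x,\lambda s,\lambda\xi)(\lambda s)\geq\eps\,j_\xi(x,\lambda s,\lambda\xi)\cdot(\lambda\xi)\geq 0$ on $\{|\lambda s|\geq R\}$, the resulting monotonicity of $\lambda\mapsto j(x,\lambda u,\lambda\nabla u)$, the finiteness of $I((1-t)u)$, the $t$-independent $L^1$ majorant for the difference quotients $f_t$, and the reverse Fatou step giving
$\limsup_{t\to 0^+}\into f_t\leq-\into\big(\jxi{u}\cdot\nabla u+\js{u}u\big)$
are all correct, and so is the reduction of the $\Psi$-identity to \cite[Proposition 4.5]{pelsqu}.

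The gap is the one line on which the whole estimate rests: the claim that, because $\|(1-t)u-u\|_{1,2}=t\|u\|_{1,2}$ and $|dI|(u)<+\infty$, ``the properties of the weak slope'' yield
$\liminf_{t\to0^+}\into f_t\geq-|dI|(u)\,\|u\|_{1,2}$, i.e.\ $\limsup_{t\to 0^+}\frac{I(u)-I((1-t)u)}{\|u-(1-t)u\|_{1,2}}\leq|dI|(u)$.
This is not a property of the weak slope. The weak slope is obtained by \emph{producing} a continuous deformation of a full neighbourhood of $u$ inside the sublevel set which decreases $I$ uniformly; constructing such a deformation with rate $\sigma$ gives the \emph{lower} bound $|dI|(u)\geq\sigma$ (this is precisely \cite[Proposition 2.5]{DM}, which the paper invokes). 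By contrast, the descent rate of $I$ along a single curve through $u$ controls the \emph{strong} slope $|\nabla I|(u)=\limsup_{v\to u}(I(u)-I(v))^+/\|u-v\|_{1,2}$, and the general inequality between the two is $|dI|(u)\leq|\nabla I|(u)$, i.e.\ in the wrong direction for your argument. So $|dI|(u)<+\infty$ simply does not bound the rate at which $I$ drops along the particular ray $t\mapsto(1-t)u$; the function could descend much faster along this one curve while still having small weak slope (already for a $1$-Lipschitz function on $\R$ attaining a strict local maximum the strong slope is $1$ but the weak slope vanishes). The paper's proof circumvents exactly this point: it does not compute a directional derivative at $u$, but shows, via the contradiction-plus-Fatou argument, that the quantity $\into\big[j_s(x,w,\nabla w)v+j_\xi(x,w,\nabla w)\cdot\nabla v\big]$ stays above $\sigma\|u\|_{1,2}$ for \emph{all} $v$ in a ball around $u$ and all small scalings $w=(1-\alpha\tau)v$, and then defines $\mathcal{H}(v,t)=(1-\alpha t)v$ on $B(u,\delta)\cap I^b\times[0,\delta]$. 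That uniformity over the neighbourhood is the substance you would need to supply; a one-parameter curve is not enough.

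One further small point: once \eqref{stimaslope} is in hand, your derivation of the separate summability of $\jxi{u}\cdot\nabla u$ and $\js{u}u$ is essentially the same as the paper's, so the remainder of the proposal would go through if the estimate were repaired.
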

\begin{proof}
Let $b\in\R$ be such that $b>I(u)$. Notice first that if $u$ is such that 
$$
\int_\Omega j_\xi(x,u,\nabla u)\cdot \nabla u+j_s(x,u,\nabla u)u\leq 0,
$$
then the conclusion holds. Otherwise, let $\sigma$ be an arbitrary positive number such that
$$
\int_\Omega\,j_\xi(x,u,\nabla u)\cdot\nabla u\,+ j_s(x,u,\nabla u)u>\sigma\|u\|_{1,2}.
$$
Fixed $\eta>0$, we set $\alpha^{-1}=\|u\|_{1,2}(1+\eta)$.
Let us prove that there exist $\delta>0$ such that, for all $v\in B(u,\delta)$ and for any
$\tau \in L^{\infty}(\Omega)$ with $\|\tau\|_{\infty}<\delta$, it follows
\begin{equation}
    \label{qdist1}
\int_\Omega [j_s(x,w,(1-\alpha \tau)\nabla v)v+ j_\xi(x,w,(1-\alpha \tau)\nabla v)\cdot\nabla v]>\sigma \|u\|_{1,2},
\end{equation}
where $w=(1-\alpha \tau)v$. In fact, assume by contradiction that this is not the case. Then, we
find a sequence $(v_n) \subset \hsob$ with $\|v_n-u\|_{1,2}\to 0$ as $n\to\infty$ and a sequence
$(\tau_n) \subset L^{\infty}(\Omega)$ with $\|\tau_n\|_{\infty}\to 0$ as $n\to\infty$
such that, denoting $w_n=(1-\alpha\tau_n)v_n$ for all $n\geq 1$, it holds
\begin{equation}
    \label{contradarg}
\int_\Omega [j_s(x,w_n,(1-\alpha \tau_n)\nabla v_n)v_n+ j_\xi(x,w_n,(1-\alpha \tau_n)\nabla v_n)\cdot\nabla v_n]\leq\sigma \|u\|_{1,2}.
\end{equation}
Since $v_n \to u$ in $\hsob$ and $\tau_n\to 0$ in $L^{\infty}(\Omega)$ as $n\to\infty$, a.e.~in $\Omega$
we have that
$$
j_s(x,w_n,(1-\alpha \tau_n)\nabla v_n)v_n+ j_\xi(x,w_n,(1-\alpha \tau_n)\nabla v_n)\cdot \nabla v_n
\to
j_s(x,u,\nabla u)u+j_\xi(x,u,\nabla u)\cdot \nabla u.
$$
Moreover there exists a positive constant $C(R)$ such that, for every $n\geq 1$,
\begin{equation}
	\label{4fatouprep}
j_s(x,w_n,(1-\alpha \tau_n)\nabla v_n)v_n+ j_\xi(x,w_n,(1-\alpha \tau_n)\nabla v_n)\cdot\nabla v_n\geq -C(R)|\nabla v_n|^2.
\end{equation}
In fact, if $|w_n(x)|\geq R$, from condition~\eqref{gensign} the left hand side is nonnegative.
If instead $|w_n(x)|\leq R$, we can assume $|v_n(x)|\leq 2R$, and by \eqref{originalgrowths2} we get
\begin{align*}
 |j_s(x,w_n,&(1-\alpha \tau_n)\nabla v_n)v_n+ j_\xi(x,w_n,(1-\alpha \tau_n)\nabla v_n)\cdot\nabla v_n| \\
& \leq \gamma(|w_n|)|v_n||\nabla v_n|^2+\mu(|w_n|)|\nabla v_n|^2\leq (2\gamma(R)R+\mu(R))|\nabla v_n|^2.
\end{align*}
Then, we are allowed to apply Fatou's Lemma, yielding
\begin{gather*}
\liminf_{n\to \infty}
\int_\Omega [j_s(x,w_n,(1-\alpha \tau_n)\nabla v_n)v_n+ j_\xi(x,w_n,(1-\alpha \tau_n)\nabla v_n)\cdot \nabla v_n]
\\
\geq\int_\Omega j_s(x,u,\nabla u)u \,+j_\xi(x,u,\nabla
u)\cdot\nabla u >\sigma \|u\|_{1,2},
\end{gather*}
which immediately yields a contradiction with \eqref{contradarg}. Hence~\eqref{qdist1} holds,
for some $\delta>0$. Observe that,
since $j(x,\cdot,\cdot)$ is of class $C^1$ for a.e. $x \in \Omega$ then, for any $t \in [0,1]$ and
every $v\in {\rm dom}(I)$, there exists $0\leq \tau(x,t)\leq t$ such that
\begin{align}
	\label{identlagrange}
&j(x,(1-\alpha t)v,(1-\alpha t)\nabla v)- j(x,v,\nabla v)=\\
&-\alpha t [j_s(x,(1-\alpha \tau)v,(1-\alpha \tau)\nabla v)v+ j_\xi(x,(1-\alpha \tau)v,(1-\alpha \tau)\nabla v)\cdot \nabla v].
\notag
\end{align}
As for the inequality \eqref{4fatouprep}, for some $C(R)>0$, for $t$ small enough it holds
\begin{equation*}
j_s(x,(1-\alpha \tau)v,(1-\alpha \tau)\nabla v)v+ j_\xi(x,(1-\alpha \tau)v,(1-\alpha \tau)\nabla v)\cdot\nabla v\geq -C(R)|\nabla v|^2.
\end{equation*}
Whence, if $v\in {\rm dom}(I)$ by \eqref{identlagrange} it follows that $(1-\alpha t)v\in {\rm dom}(I)$ for all $t\in [0,\delta]$ and
\begin{equation}
    \label{summbbbb}
j_s(x,(1-\alpha \tau)v,(1-\alpha \tau)\nabla v)v+ j_\xi(x,(1-\alpha \tau)v,(1-\alpha \tau)\nabla v)\cdot\nabla v\in L^1(\Omega).
\end{equation}
Up to reducing $\delta$, we may assume that $\delta<\eta
\|u\|_{1,2}$. Then, for all $v \in B(u,\delta)$, we have $\|v\|_{1,2} \leq (1+\eta)\|u\|_{1,2}=\alpha^{-1}$.
Consider the continuous map ${\mathcal H}:B(u,\delta)\cap I^b\times [0,\delta]\to \hsob$ defined as
${\mathcal H}(v,t)=(1-\alpha t)v$, where $I^b=\{v\in H^1_0(\Omega):I(v)\leq b\}$.
From \eqref{qdist1} (applied, for each $t\in [0,\delta]$, with the function
$\tau(\cdot,t)\in L^\infty(\Omega,[0,\delta])$ for which identity~\eqref{identlagrange} holds)
and identity \eqref{identlagrange}, for every $t\in [0,\delta]$ and $v\in B(u,\delta)\cap I^b$ we have
\begin{equation*}
\|{\mathcal H}(v,t)-v\|_{1,2}\leq t,
\qquad
I({\mathcal H}(v,t))\leq I(v) - \frac{ \sigma }{1+ \eta}t.
\end{equation*}   
Then, by means of \cite[Proposition 2.5]{DM} and exploiting the arbitrariness of $\eta$, we get
$|dI|(u)\geq\sigma$. In turn, \eqref{stimaslope} follows from the
arbitrariness of $\sigma$. Concerning the second part of the statement, since $|dI|(u)<+\infty$,
from~\eqref{gensign} and~\eqref{stimaslope},
\begin{equation}
    \label{summbconclus}
j_\xi(x,u,\nabla u)\cdot\nabla u +
j_s(x,u,\nabla u)u\in L^1(\Omega).
\end{equation}
In turn, using again~\eqref{gensign}, it follows $j_\xi(x,u,\nabla u)\cdot\nabla u\in L^1(\Omega)$, since
\begin{align*}
\eps j_\xi(x,u,\nabla u)\cdot\nabla u &\leq
\eps\mu(R)|\nabla u|^2+\eps j_\xi(x,u,\nabla u)\cdot\nabla u\chi_{\{|u|\geq R\}} \\
& \leq  \eps\mu(R)|\nabla u|^2+|j_s(x,u,\nabla u)u+j_\xi(x,u,\nabla u)\cdot\nabla u|.
\end{align*}
Then, by exploiting \eqref{summbconclus} again, $j_s(x,u,\nabla u)u\in L^1(\Omega)$.
The final assertion does not rely upon any sign condition and follows directly 
from \cite[Proposition 4.5]{pelsqu}. This concludes the proof.
\end{proof}

In the next result we show that it is possible to enlarge the
class of admissible test functions. In order to do this, suppose
we have a function $u\in\hsob$  such that
\begin{equation}\label{eqb}
 \int_{\Omega}j_\xi(x,u,\nabla u)\cdot \nabla z+
\into j_s(x,u,\nabla u) z=\langle w,z\rangle, \qquad \forall z\in
V_u,
\end{equation}
for $w\in \dhsob$. Under suitable assumptions, if \eqref{gensign} holds true,
we can use $\zeta u\in\hsob$ with $\zeta\in L^\infty(\Omega)$
as an admissible test functions in~\eqref{eqb}, generalizing \cite[Theorem 4.8]{pelsqu}.

\begin{proposition}
	\label{BB}
Assume that \eqref{originalgrowths2} and \eqref{gensign} hold.
Let $w\in H^{-1}(\Omega)$, and let $u\in
H^1_0(\Omega)$ be such that~\eqref{eqb} is satisfied. Moreover,
suppose that $j_\xi(x,u,\nabla u)\cdot\nabla u\in L^1(\Omega)$
and that there exist  $v\in H^1_0(\Omega)$ and $\eta\in
L^1(\Omega)$ such that
\begin{equation}\label{control2}
j_s(x,u,\nabla u)v\geq\eta \qquad \text{and} \qquad
j_\xi(x,u,\nabla u)\cdot\nabla v\geq\eta.
\end{equation}
Then $j_s(x,u,\nabla u)v\in L^1(\Omega)$, $j_\xi(x,u,\nabla
u)\cdot\nabla v\in L^1(\Omega)$ and
\begin{equation}\label{eqb2}
\int_{\Omega} j_\xi(x,u,\nabla u)\cdot\nabla v +\into
j_s(x,u,\nabla u)v =\langle w,v\rangle.
\end{equation}
In particular, if $\zeta\in L^\infty(\Omega)$, $\zeta\geq 0$, 
$\zeta u\in H^1_0(\Omega)$ and $j_\xi(x,u,\nabla u)\cdot\nabla (\zeta u)\in L^1(\Omega)$
then it follows that $j_s(x,u,\nabla u)\zeta u\in L^1(\Omega)$ and
\begin{equation}
	\label{testu}
\int_{\Omega}  j_\xi(x,u,\nabla u)\cdot\nabla (\zeta u) +\into
j_s(x,u,\nabla u)\zeta u =\langle w,\zeta u\rangle.
\end{equation}
\end{proposition}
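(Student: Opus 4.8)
The plan is to obtain \eqref{eqb2} by testing \eqref{eqb} against truncated, cut-off approximations of $v$, and letting the two cut-off parameters go to infinity in the right order. Fix a Lipschitz function $\psi_k:\R\to[0,1]$ with $\psi_k\equiv 1$ on $[-k,k]$, $\psi_k\equiv 0$ outside $(-2k,2k)$ and $|\psi_k'|\le C/k$, and set $z_{h,k}=T_h(v)\,\psi_k(u)$. Since $\psi_k$ is compactly supported one has $\psi_k(u)=\psi_k(T_{2k}(u))\in H^1(\Omega)\cap L^\infty(\Omega)$ with $\nabla\psi_k(u)=\psi_k'(u)\chi_{\{|u|<2k\}}\nabla u$, so $z_{h,k}\in H^1_0(\Omega)\cap L^\infty(\Omega)$; and as $\{z_{h,k}\neq0\}\subseteq\{|u|<2k\}$ we get $u\in L^\infty(\{z_{h,k}\neq0\})$, i.e. $z_{h,k}\in V_u$. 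Inserting $z_{h,k}$ into \eqref{eqb} and using $\nabla z_{h,k}=\psi_k(u)\nabla T_h(v)+T_h(v)\psi_k'(u)\chi_{\{|u|<2k\}}\nabla u$, for all $h,k>0$ one gets $P_{h,k}+B_{h,k}+Q_{h,k}=\langle w,z_{h,k}\rangle$, where
\begin{equation*}
P_{h,k}=\into \psi_k(u)\,\jxi{u}\cdot\nabla T_h(v),\qquad
Q_{h,k}=\into \psi_k(u)\,\js{u}\,T_h(v),\qquad
B_{h,k}=\into T_h(v)\,\psi_k'(u)\,\jxi{u}\cdot\nabla u .
\end{equation*}
Each of these integrals is absolutely convergent, since on $\{|u|<2k\}$ the coefficients $\jxi{u},\js{u}$ are controlled via \eqref{originalgrowths2} by $\mu(2k)|\nabla u|$ and $\gamma(2k)|\nabla u|^2$, $T_h(v)$ is bounded, and $\jxi{u}\cdot\nabla u\in L^1(\Omega)$ by assumption.

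Now fix $h$ and let $k\to\infty$. Because $|T_h(v)\psi_k'(u)|\le Ch/k$ with the factor supported in $\{k<|u|<2k\}$, we have $|B_{h,k}|\le (Ch/k)\into|\jxi{u}\cdot\nabla u|\to0$, and, since moreover $|T_h(v)\psi_k'(u)\nabla u|\le (Ch/k)|\nabla u|$ and $\psi_k(u)\to1$ a.e., one checks $z_{h,k}\to T_h(v)$ in $\hsob$, so $\langle w,z_{h,k}\rangle\to\langle w,T_h(v)\rangle$. For $P_{h,k}$ and $Q_{h,k}$ we exploit the one-sided bounds \eqref{control2}: using $\nabla T_h(v)=\chi_{\{|v|\le h\}}\nabla v$ and $0\le T_h(v)/v\le1$, the integrands $\jxi{u}\cdot\nabla T_h(v)$ and $\js{u}T_h(v)$ are both bounded below by the fixed $L^1(\Omega)$ function $-\eta^-$, uniformly in $h$. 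Splitting each integrand into positive and negative parts, using $\psi_k(u)\uparrow1$, monotone convergence on the (nonnegative) positive parts and dominated convergence — by $\eta^-$ — on the negative parts, the limit $k\to\infty$ yields
\begin{align*}
&\into \big(\jxi{u}\cdot\nabla T_h(v)\big)^{+}+\into\big(\js{u}T_h(v)\big)^{+} \\
&=\langle w,T_h(v)\rangle+\into\big(\jxi{u}\cdot\nabla T_h(v)\big)^{-}+\into\big(\js{u}T_h(v)\big)^{-}.
\end{align*}
The right-hand side is finite, hence the a priori possibly infinite positive parts on the left are finite; therefore $\jxi{u}\cdot\nabla T_h(v)\in L^1(\Omega)$, $\js{u}T_h(v)\in L^1(\Omega)$ and $\into\jxi{u}\cdot\nabla T_h(v)+\into\js{u}T_h(v)=\langle w,T_h(v)\rangle$. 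Letting finally $h\to\infty$, we have $T_h(v)\to v$ in $\hsob$ and $\big(\jxi{u}\cdot\nabla T_h(v)\big)^{+}\uparrow\big(\jxi{u}\cdot\nabla v\big)^{+}$, $\big(\js{u}T_h(v)\big)^{+}\uparrow\big(\js{u}v\big)^{+}$, while the negative parts remain dominated by $\eta^-$; the same positive/negative-part argument then gives $\js{u}v\in L^1(\Omega)$, $\jxi{u}\cdot\nabla v\in L^1(\Omega)$ and \eqref{eqb2}.

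For the last assertion we apply the statement just proved with $v=\zeta u$: by hypothesis $\zeta u\in\hsob$ and $\jxi{u}\cdot\nabla(\zeta u)\in L^1(\Omega)$, so the second inequality in \eqref{control2} holds with $\eta=\jxi{u}\cdot\nabla(\zeta u)$; and, splitting $\Omega$ into $\{|u|\ge R\}$ and $\{|u|<R\}$ and using \eqref{gensign}, \eqref{jxicoerc}, \eqref{originalgrowths2} together with $\zeta\ge0$, one obtains $\js{u}\,\zeta u\ge-\|\zeta\|_{\infty}\big(\jxi{u}\cdot\nabla u+\gamma(R)R|\nabla u|^2\big)\in L^1(\Omega)$, which gives the first inequality in \eqref{control2}. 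Hence $\js{u}\,\zeta u\in L^1(\Omega)$ and \eqref{testu} follows. The main obstacle is the limit $k\to\infty$: as $v$ lies only in $\hsob$, there is no $L^1$-domination available for $\jxi{u}\cdot\nabla T_h(v)$ or $\js{u}T_h(v)$, and their integrability must be recovered a posteriori by combining the one-sided bounds \eqref{control2} (which control only the negative parts) with the equation \eqref{eqb} itself (which forces the positive parts to be integrable); crucially, the fact that the cut-off correction $B_{h,k}$ is negligible rests on the assumption $\jxi{u}\cdot\nabla u\in L^1(\Omega)$.
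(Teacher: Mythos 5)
Your proof is correct, but it takes a different route from the paper's. For the first part of the statement the paper simply cites \cite[Theorem 4.8]{pelsqu} as a known result; you instead re-derive it from scratch by testing \eqref{eqb} against $z_{h,k}=T_h(v)\psi_k(u)\in V_u$, killing the error term $B_{h,k}$ via the hypothesis $j_\xi(x,u,\nabla u)\cdot\nabla u\in L^1(\Omega)$ and the bound $|T_h(v)\psi_k'(u)|\leq Ch/k$, and then passing first $k\to\infty$ and then $h\to\infty$ using the one-sided bounds \eqref{control2} to split into positive parts (monotone convergence, finiteness recovered a posteriori from the equation) and negative parts (dominated by $\eta^-$). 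This is self-contained and makes the role of each hypothesis transparent, at the cost of being longer and of essentially reproving what the authors treat as a black box; it also makes clear that \eqref{gensign} is \emph{not} needed for the first part, only for verifying \eqref{control2} with $v=\zeta u$ in the ``in particular'' clause. For that last clause your computation — splitting $\Omega$ into $\{|u|\geq R\}$, where \eqref{gensign} and $j_\xi\cdot\nabla u\geq 0$ give $\zeta j_s u\geq -\|\zeta\|_\infty j_\xi\cdot\nabla u$, and $\{|u|< R\}$, where \eqref{originalgrowths2} gives $\zeta j_s u\geq -R\gamma(R)\|\zeta\|_\infty|\nabla u|^2$ — coincides with the paper's argument.

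Two small points worth making explicit in a written-up version: you should choose the cut-offs so that $\psi_k$ is monotone in $k$ (e.g.\ $\psi_k(s)=\psi(s/k)$ for a fixed even $\psi$, nonincreasing in $|s|$), so that the monotone convergence steps are literal; and the strong $H^1_0$-convergence $z_{h,k}\to T_h(v)$ uses that $\|T_h(v)\psi_k'(u)\nabla u\|_{L^2}\leq (Ch/k)\|\nabla u\|_{L^2}\to 0$, which again relies on $u\in H^1_0(\Omega)$. Neither is a gap — both are routine — but they deserve a sentence each.
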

\begin{proof}
The first part of the statement follows by means of \cite[Theorem 4.8]{pelsqu}. By assumption \eqref{gensign} 
and since $\zeta$ is nonnegative and bounded, we have
\begin{align*}
j_s(x,u,\nabla u)\zeta u &=\zeta j_s(x,u,\nabla u)u\chi_{\{|u|\leq R\}}+\zeta j_s(x,u,\nabla u)u\chi_{\{|u|\geq R\}} \\
& \geq -R\gamma(R)\|\zeta\|_{L^\infty(\Omega)}|\nabla u|^2-(1-\eps) \zeta j_\xi(x,u,\nabla u)\cdot\nabla u \in L^1(\Omega).
\end{align*}
The last assertion of the statement then follows from the first one.
\end{proof}

\subsection{AR type conditions}
\label{ambrrabsect}
Some Ambrosetti-Rabinowitz type conditions, typically used in order 
to guarantee the boundedness of Palais-Smale sequences, remain invariant.

\begin{proposition}
    \label{invar2}
	Let $\varphi\in C^2(\R)$ be a diffeomorphism which satisfies 
	the properties of Definition~\ref{diffeoclass}.
Assume that there exist $\delta>0$, $\nu>2(1-\beta)$ and $R\geq 0$ such that
$$
\nu j(x,s,\xi)-(1+\delta)j_\xi(x,s,\xi)\cdot\xi-j_s(x,s,\xi)s-\nu G(x,s)+g(x,s)s\geq 0,
$$
and $G(x,s)\geq 0$ for a.e.~$x\in\Omega$ and all $(s,\xi)\in\R\times\R^N$ with $|s|\geq R$. 
\vskip4pt 
\noindent 
Then there exist $\delta^\sharp>0$, $\nu^\sharp>2$ and $R^\sharp>0$ such that
$$
\nu^\sharp j^\sharp(x,s,\xi)-(1+\delta^\sharp )
j^\sharp_\xi(x,s,\xi)\cdot\xi-j^\sharp_s(x,s,\xi)s- \nu^\sharp
G^\sharp(x,s)+g^\sharp (x,s)s \geq 0,
$$
and $G^\sharp(x,s)\geq 0$ for a.e.~$x\in\Omega$ and all $(s,\xi)\in\R\times\R^N$ with $|s|\geq R^\sharp$.
\end{proposition}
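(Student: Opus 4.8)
The plan is to pull the $\sharp$-inequality back to the assumed one through the substitution $t=\varphi(s)$, $\eta=\varphi'(s)\xi$. First I would record the change-of-variable identities. Putting $a(s)=s\varphi'(s)/\varphi(s)$ and $b(s)=1+s\varphi''(s)/\varphi'(s)$ — even functions of $s$ since $\varphi$ is odd, hence with $a(s),b(s)\to(1-\beta)^{-1}$ as $|s|\to\infty$ by \eqref{two} — differentiation of $j^\sharp(x,s,\xi)=j(x,\varphi(s),\varphi'(s)\xi)$ gives $j^\sharp(x,s,\xi)=j(x,t,\eta)$, $j^\sharp_\xi(x,s,\xi)\cdot\xi=j_\xi(x,t,\eta)\cdot\eta$, $G^\sharp(x,s)=G(x,t)$, together with
$$
g^\sharp(x,s)\,s=a(s)\,t\,g(x,t),\qquad j^\sharp_s(x,s,\xi)\,s=a(s)\,t\,j_s(x,t,\eta)+(b(s)-1)\,j_\xi(x,t,\eta)\cdot\eta .
$$
Substituting into the left-hand side $E^\sharp(x,s,\xi)$ of the asserted inequality and writing $E(x,t,\eta)$ for the left-hand side of the assumed one, a routine rearrangement yields the exact identity
$$
E^\sharp(x,s,\xi)=a(s)\,E(x,t,\eta)+\bigl(\nu^\sharp-\nu\,a(s)\bigr)\bigl(j(x,t,\eta)-G(x,t)\bigr)+c(s)\,j_\xi(x,t,\eta)\cdot\eta,
$$
with $c(s)=a(s)(1+\delta)-\delta^\sharp-b(s)$.

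Next I would fix the constants. Since $\nu>2(1-\beta)$ we have $\nu/(1-\beta)>2$, so I may choose $\varepsilon>0$ small enough that $\nu^\sharp:=\nu/(1-\beta)-\varepsilon$ still satisfies $\nu^\sharp>2$ and, in addition, $\varepsilon<\delta/\bigl(2(1-\beta)\bigr)$; I set $\delta^\sharp:=\delta/\bigl(2(1-\beta)\bigr)>0$. Using that $\varphi$ is an increasing diffeomorphism (hence $|\varphi(s)|\to\infty$) together with the limits of $a$ and $b$, I would then pick $R^\sharp>0$ so large that for every $|s|\ge R^\sharp$: (i) $|\varphi(s)|\ge R$, so that the hypothesis applies at $(x,\varphi(s),\varphi'(s)\xi)$ and $G(x,\varphi(s))\ge0$; (ii) $0<\nu^\sharp\le\nu\,a(s)$, which holds because $\nu\,a(s)\to\nu/(1-\beta)>\nu^\sharp$; (iii) $c(s)+\nu^\sharp-\nu\,a(s)\ge0$, which holds because this quantity tends to $\delta/\bigl(2(1-\beta)\bigr)-\varepsilon>0$.

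Finally I would read the conclusion off the identity, for $|s|\ge R^\sharp$. The first term is nonnegative since $a(s)>0$ and $E(x,\varphi(s),\varphi'(s)\xi)\ge0$ by (i). For the second term, the convexity of $\xi\mapsto j(x,t,\xi)$ together with $j(x,t,0)=0$ gives $j(x,t,\eta)\le j_\xi(x,t,\eta)\cdot\eta$, while $G(x,t)\ge0$ by (i), so $j(x,t,\eta)-G(x,t)\le j_\xi(x,t,\eta)\cdot\eta$; multiplying by the nonpositive number $\nu^\sharp-\nu\,a(s)$ (nonpositive by (ii)) shows the second term is $\ge(\nu^\sharp-\nu\,a(s))\,j_\xi(x,t,\eta)\cdot\eta$. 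Adding the third term and invoking $j_\xi(x,t,\eta)\cdot\eta\ge0$ (which is \eqref{jxicoerc}) together with (iii) gives $E^\sharp(x,s,\xi)\ge\bigl(c(s)+\nu^\sharp-\nu\,a(s)\bigr)\,j_\xi(x,t,\eta)\cdot\eta\ge0$, which is the first asserted inequality; the second, $G^\sharp(x,s)=G(x,\varphi(s))\ge0$, follows from (i).

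The main obstacle is the choice of $\nu^\sharp$. The naive value $\nu^\sharp=\nu/(1-\beta)$ (the analogue of the exponent used in Proposition~\ref{menoinf}) is \emph{not} admissible here: testing the inequality at $\xi=0$ reduces $E^\sharp$ to $\bigl(\nu\,a(s)-\nu^\sharp\bigr)G(x,\varphi(s))$, which forces $\nu^\sharp\le\nu\,a(s)$, and $a(s)$ may approach $(1-\beta)^{-1}$ strictly from below, as it does for the model \eqref{fimodel}; the room that makes the slightly smaller choice admissible is supplied precisely by $\delta>0$ in the Ambrosetti--Rabinowitz inequality. A second point to watch is that the ``wrong sign'' contribution $(\nu^\sharp-\nu\,a(s))(j-G)$ must be absorbed into $c(s)\,j_\xi\cdot\eta$; this requires the sharp convexity bound $j(x,t,\eta)\le j_\xi(x,t,\eta)\cdot\eta$, since the coarser estimate $j(x,t,\eta)\le\alpha(|t|)^2\,j_\xi(x,t,\eta)\cdot\eta$ would reintroduce the unbounded factor $\alpha(|\varphi(s)|)$ and break the argument.
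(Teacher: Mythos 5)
Your proof is correct and follows essentially the same route as the paper: both pull the inequality back through $(s,\xi)\mapsto(\varphi(s),\varphi'(s)\xi)$, factor out the ratio $s\varphi'(s)/\varphi(s)$, invoke the limits in \eqref{two}, and absorb the residual terms by the convexity bound $j\le j_\xi\cdot\xi$. Your exact identity for $E^\sharp$ together with the explicit choices $\delta^\sharp=\delta/(2(1-\beta))$ and $\nu^\sharp=\nu/(1-\beta)-\varepsilon$ simply makes the bookkeeping more transparent than the paper's $\eta$-slack computation, and your closing remark correctly identifies why $\nu^\sharp=\nu/(1-\beta)$ itself is inadmissible.
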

\begin{proof}
A direct calculation yields
\begin{align*}
&   \frac{\nu}{1-\beta}
j^\sharp(x,s,\xi)-j^\sharp_\xi(x,s,\xi)\cdot\xi
-j^\sharp_s(x,s,\xi)s-\frac{\nu}{1-\beta}  G^\sharp(x,s)+ g^\sharp(x,s)s \\
&=\frac{\nu}{1-\beta} j(x,\varphi(s),\varphi'(s)\xi)-
\Big(1+\frac{\varphi''(s)s}{\varphi'(s)}\Big)j_\xi(x,\varphi(s),\varphi'(s)\xi)\cdot \varphi'(s)\xi  \\
& -\frac{\varphi'(s)s}{\varphi(s)} j_s(x,\varphi(s),\varphi'(s)\xi)\varphi(s)-
\frac{\nu}{1-\beta} G(x,\varphi(s))+\frac{\varphi'(s)s}{\varphi(s)} g(x,\varphi(s))\varphi(s) \\
&= \frac{\varphi'(s)s}{\varphi(s)}\Big( \frac{\varphi(s)}{\varphi'(s)s}\frac{\nu}{1-\beta} j(x,\varphi(s),\varphi'(s)\xi) \\
\noalign{\vskip3pt}
& -\frac{\varphi(s)}{\varphi'(s)s}\Big(1+\frac{\varphi''(s)s}{\varphi'(s)}\Big)j_\xi(x,\varphi(s),\varphi'(s)\xi)\cdot \varphi'(s)\xi  \\
& -j_s(x,\varphi(s),\varphi'(s)\xi)\varphi(s)-
\frac{\nu}{1-\beta}\frac{\varphi(s)}{\varphi'(s)s} G(x,\varphi(s))+g(x,\varphi(s))\varphi(s) \Big),
\end{align*}
for a.e.~$x\in\Omega$ and all $(s,\xi)\in\R\times\R^N$ such that $s\neq 0$.
We recall that $j(x,\tau,\zeta)\geq 0$, $j_\xi(x,\tau,\zeta)\cdot \zeta\geq 0$ and that the map $s\mapsto s\varphi(s)$ is nonnegative.
Therefore, on account of condition \eqref{two}, for all $\eta>0$ small enough there exists
$R^\sharp>0$ large enough that $|\varphi(s)|\geq R$ for all $s\in\R$ with $|s|\geq R^\sharp$ and 
\begin{align*}
&   \frac{\nu}{1-\beta}
j^\sharp(x,s,\xi)-j^\sharp_\xi(x,s,\xi)\cdot\xi
-j^\sharp_s(x,s,\xi)s-\frac{\nu}{1-\beta} G^\sharp (x,s)+g^\sharp (x,s)s \\
&\geq \frac{\varphi'(s)s}{\varphi(s)} \Big(\nu j(x,\varphi(s),\varphi'(s)\xi)-\eta(1-\beta)j(x,\varphi(s),\varphi'(s)\xi)  \\
\noalign{\vskip2.5pt}
&- j_\xi(x,\varphi(s),\varphi'(s)\xi)\cdot \varphi'(s)\xi
-\eta(1-\beta) j_\xi(x,\varphi(s),\varphi'(s)\xi)\cdot \varphi'(s)\xi  \\
\noalign{\vskip2.5pt}
& -j_s(x,\varphi(s),\varphi'(s)\xi)\varphi(s)-
\nu G(x,\varphi(s))-\eta(1-\beta) G(x,\varphi(s)) +g(x,\varphi(s))\varphi(s)\Big)\\
\noalign{\vskip2.5pt}
& \geq ((1-\beta)^{-1}-\eta)(\delta-\eta(1-\beta)) j^\sharp_\xi(x,s,\xi)\cdot \xi \\
\noalign{\vskip2.5pt} &
-\frac{\varphi'(s)s}{\varphi(s)}(1-\beta)\eta j^\sharp(x,s,\xi)
 -\frac{\varphi'(s)s}{\varphi(s)}(1-\beta)\eta G^\sharp (x,s) \\
\noalign{\vskip2pt} &\geq
((1-\beta)^{-1}-\eta)(\delta-\eta(1-\beta))
j^\sharp_\xi(x,s,\xi)\cdot \xi -2\eta j^\sharp(x,s,\xi)
 -2\eta G^\sharp(x,s),
\end{align*}
for a.e.~$x\in\Omega$ and all $(s,\xi)\in\R\times\R^N$ such that $|s|\geq R^\sharp$.
Finally, since by convexity of $j^\sharp$ and $j^\sharp(x,s,0)=0$ we have
$j^\sharp_\xi(x,s,\xi)\cdot\xi\geq j^\sharp(x,s,\xi)$, we get
\begin{gather*}
\frac{\nu}{1-\beta}
j^\sharp(x,s,\xi)-j^\sharp_\xi(x,s,\xi)\cdot\xi
-j^\sharp_s(x,s,\xi)s-\frac{\nu}{1-\beta} G^\sharp(x,s)+g^\sharp(x,s)s  \\
\geq \delta^\sharp j^\sharp_\xi(x,s,\xi)\cdot \xi+2\eta
j^\sharp(x,s,\xi) -2\eta G^\sharp(x,s).
\end{gather*}
In turn, choosing $\eta$ small enough and setting
$$
\delta^\sharp=(1-\beta)^{-1}\delta-\eta(5+\delta)+\eta^2(1-\beta)>0,\qquad
\nu^\sharp =\nu(1-\beta)^{-1}-2\eta>2,
$$
the assertion follows.
\end{proof}

\begin{corollary}
    \label{invar2-cor}
	Let $\varphi\in C^2(\R)$ be a diffeomorphism satisfying
	the properties of Definition \ref{diffeoclass}.
Assume that $\xi\mapsto j(x,s,\xi)$ is homogeneous of degree two and that
there are $\nu>2$ and $R>0$ with
\begin{equation}
    \label{separatasss}
j_s(x,s,\xi)s\leq 0,\qquad 0\leq \nu G(x,s)\leq g(x,s)s,
\end{equation}
for a.e.~$x\in\Omega$ and all $(s,\xi)\in\R\times\R^N$ with $|s|\geq R$. Then
$$
\nu^\sharp  j^\sharp(x,s,\xi)-(1+
\delta^\sharp)j^\sharp_\xi(x,s,\xi)\cdot\xi-j^\sharp_s(x,s,\xi)s-
\nu^\sharp G^\sharp(x,s)+g^\sharp(x,s)s \geq 0,
$$
for a.e.~$x\in\Omega$ and all $(s,\xi)\in\R\times\R^N$ with
$|s|\geq R^\sharp$, for some $\delta^\sharp>0$, $R^\sharp>0$ and $\nu^\sharp>2$.
\end{corollary}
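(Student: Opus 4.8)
The plan is to derive the Corollary as an immediate consequence of Proposition~\ref{invar2}, once the homogeneity of $j$ in $\xi$ has been used to simplify the Ambrosetti--Rabinowitz quantity. The crucial remark is that, since $\xi\mapsto j(x,s,\xi)$ is of class $C^1$ and positively homogeneous of degree two, Euler's identity gives
\begin{equation*}
j_\xi(x,s,\xi)\cdot\xi=2\,j(x,s,\xi),\qquad\text{for a.e.\ }x\in\Omega\text{ and all }(s,\xi)\in\R\times\R^N.
\end{equation*}
Substituting this into the expression appearing in the hypothesis of Proposition~\ref{invar2}, namely $\nu j(x,s,\xi)-(1+\delta)j_\xi(x,s,\xi)\cdot\xi-j_s(x,s,\xi)s-\nu G(x,s)+g(x,s)s$, I would rewrite it as
\begin{equation*}
(\nu-2-2\delta)\,j(x,s,\xi)-j_s(x,s,\xi)s+\bigl(g(x,s)s-\nu G(x,s)\bigr).
\end{equation*}

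Next I would verify that the hypotheses of Proposition~\ref{invar2} are met with the very same exponent $\nu$. Since $\beta\in[0,1)$, we have $\nu>2\geq 2(1-\beta)$, so $\nu$ is an admissible choice there; moreover any $\delta\in(0,(\nu-2)/2)$ makes $\nu-2-2\delta>0$. Then, for a.e.\ $x\in\Omega$ and every $(s,\xi)\in\R\times\R^N$ with $|s|\geq R$, each of the three terms in the displayed expression is nonnegative: $j(x,s,\xi)\geq\frac{1}{\alpha(|s|)}|\xi|^2\geq 0$ by~\eqref{originalgrowths1}; $-j_s(x,s,\xi)s\geq 0$ by the first inequality in~\eqref{separatasss}; and $g(x,s)s-\nu G(x,s)\geq 0$ by the second one. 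Hence the whole quantity is $\geq 0$ on $\{|s|\geq R\}$, and $G(x,s)\geq 0$ there as well, again by~\eqref{separatasss}. Thus all the assumptions of Proposition~\ref{invar2} hold, with parameters $\nu$, this $\delta>0$, and $R$.

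Applying Proposition~\ref{invar2} then yields $\delta^\sharp>0$, $\nu^\sharp>2$ and $R^\sharp>0$ for which $\nu^\sharp j^\sharp(x,s,\xi)-(1+\delta^\sharp)j^\sharp_\xi(x,s,\xi)\cdot\xi-j^\sharp_s(x,s,\xi)s-\nu^\sharp G^\sharp(x,s)+g^\sharp(x,s)s\geq 0$ for a.e.\ $x\in\Omega$ and all $(s,\xi)$ with $|s|\geq R^\sharp$, which is exactly the claimed inequality (Proposition~\ref{invar2} even provides the extra fact that $G^\sharp\geq 0$ on $\{|s|\geq R^\sharp\}$, which we simply discard). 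I do not expect any substantial obstacle; the only point requiring a little care is the bookkeeping of parameters, i.e.\ observing that the exponent $\nu$ of the Corollary already lies in the range $\nu>2(1-\beta)$ demanded by Proposition~\ref{invar2}, and that a strictly positive $\delta$ with $\nu-2-2\delta\geq 0$ exists precisely because $\nu>2$.
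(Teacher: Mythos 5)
Your argument is correct and follows essentially the same route as the paper's own proof: use Euler's identity $j_\xi(x,s,\xi)\cdot\xi=2j(x,s,\xi)$ for the $2$-homogeneous $j$, choose $\delta\in(0,(\nu-2)/2)$ so that $\nu j-(1+\delta)j_\xi\cdot\xi=(\nu-2-2\delta)j\geq 0$, observe that the remaining terms and $G\geq 0$ on $\{|s|\geq R\}$ follow from \eqref{separatasss}, and invoke Proposition~\ref{invar2}. The only difference is that you spell out the parameter bookkeeping (notably $\nu>2\geq 2(1-\beta)$) that the paper leaves implicit; the substance is identical.
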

\begin{proof}
Since $\xi\mapsto j(x,s,\xi)$ is $2$-homogeneous and $\nu>2$, there exists $\delta>0$ with
$$
\nu j(x,s,\xi)-(1+\delta)j_\xi(x,s,\xi)\cdot\xi=(\nu-2-2\delta)j(x,s,\xi)\geq 0,
$$
for a.e.~$x\in\Omega$ and all $(s,\xi)\in\R\times\R^N$. Hence, by assumptions~\eqref{separatasss}, we get
$$
\nu j(x,s,\xi)-(1+\delta)j_\xi(x,s,\xi)\cdot\xi-j_s(x,s,\xi)s-\nu G(x,s)+g(x,s)s\geq 0,
$$
for a.e.~$x\in\Omega$ and all $(s,\xi)\in\R\times\R^N$ with $|s|\geq R$.
Proposition~\ref{invar2} yields the assertion.
\end{proof}

\section{Multiplicity of solutions}
\label{existence-sect}
\noindent
As a by-product of the previous results, we obtain the following existence result.
Compared with the results of \cite{AbO} here we can get infinitely many solution, not necessarily 
bounded.

\begin{theorem}
	\label{existthmm}
	Assume that $\varphi\in C^2(\R)$ satisfies the properties of Definition~\ref{diffeoclass},
\eqref{diffeoasymptoticbb} and let $N\geq 3$. Moreover, 
let $j:\Omega\times\R\times\R^N\to\R$ satisfy \eqref{originalgrowths1}-\eqref{originalgrowths2},
$\xi\mapsto j(x,s,\xi)$ be strictly convex, and
\begin{align}
	\label{parita}
j(x,-s-\xi)=j(x,s,\xi),\quad\text{for a.e.\ $x\in\Omega$ and all $(s,\xi)\in\R\times\R^N$,} \\ 
	\label{signostrongg}
j_s^\sharp(x,s,\xi)s\geq 0,\quad \text{for all $|s|\geq R^\sharp$ and some $R^\sharp\geq 0$}.
\end{align}
Let $g:\Omega\times\R\to\R$ be continuous, satisfying \eqref{growthassty} with $2<p<2^*(1-\beta)$,
\begin{equation}
	\label{g-disparit}
g(x,-s)=-g(x,s),\quad\text{for a.e.\ $x\in\Omega$ and all $s\in\R$,} 
\end{equation}
$G(x,s)\geq 0$ for $|s|\geq R$ and the 
joint conditions \eqref{genARcondition} and \eqref{cdpdmm}, for some $R\geq 0$. Then, 
\begin{equation*}
\begin{cases}
-\dvg( j_\xi(x,u,\nabla u)) +j_s(x,u,\nabla u)=g(x,u), & \text{in $\Omega$}, \\
\quad u=0, & \text{on $\partial\Omega$}
\end{cases}
\end{equation*}
admits a sequence $(u_n)$ of generalized solutions in the sense of Definition~\ref{defsol-bis}. Furthermore,
\begin{align*}
\frac{2N}{N+2}<q<\frac{N}{2}
&\quad\Longrightarrow\quad
u_n\in L^{\frac{Nq(1-\beta)}{N-2q}}(\Omega),  \\
q>\frac{N}{2}  &\quad\Longrightarrow\quad u_n\in
L^{\infty}(\Omega),
\end{align*}
in the notations of assumptions \eqref{growthassty}. In particular, if $q>N/2$,
it follows that $u_h\in H^1_0(\Omega)\cap L^{\infty}(\Omega)$ are solutions in distributional sense.
\end{theorem}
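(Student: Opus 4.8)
The plan is to reduce the possibly degenerate problem \eqref{ooorigg} to the non-degenerate problem \eqref{probmoddd} for the transformed data $j^\sharp,g^\sharp$, to invoke the existence and multiplicity theory of \cite{pelsqu} for \eqref{probmoddd}, and to pull the solutions back through $\varphi$ by Proposition~\ref{soluzdisrt}. First I would check that $(j^\sharp,g^\sharp)$ enjoys all the structural properties required by \cite{pelsqu}. Strict convexity of $\xi\mapsto j^\sharp(x,s,\xi)$ and the identity $j^\sharp(x,s,0)=0$ follow from the corresponding properties of $j$ together with $\varphi'>0$ (which holds by \eqref{one}). Proposition~\ref{rm1} supplies the \emph{non-degenerate} bounds $\sigma^\sharp|\xi|^2\le j^\sharp(x,s,\xi)\le\alpha^\sharp(|s|)|\xi|^2$ and the controls on $j^\sharp_s,j^\sharp_\xi$. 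Since $\varphi$ is odd, $\varphi'$ is even, so \eqref{parita} gives $j^\sharp(x,-s,-\xi)=j(x,-\varphi(s),-\varphi'(s)\xi)=j^\sharp(x,s,\xi)$, while \eqref{g-disparit} gives $g^\sharp(x,-s)=-g^\sharp(x,s)$ and hence $G^\sharp$ even; thus the energy functional attached to \eqref{probmoddd} is even. Condition \eqref{signostrongg} is exactly the classical sign condition \eqref{classicsign} written for $j^\sharp$. Because $\nu>2\ge 2(1-\beta)$, Proposition~\ref{invar2} converts \eqref{genARcondition} together with $G\ge0$ for $|s|\ge R$ into the AR inequality for $(j^\sharp,g^\sharp)$ with suitable $\delta^\sharp>0$, $\nu^\sharp>2$, $R^\sharp>0$ and $G^\sharp\ge0$ for $|s|\ge R^\sharp$; likewise Proposition~\ref{menoinf} converts \eqref{cdpdmm} into $\alpha^\sharp(|s|)/|s|^{\nu^\sharp-2}\to0$ together with the matching lower bound on $G^\sharp$. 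Finally, the constraint $2<p<2^*(1-\beta)$ forces $0\le\beta<2/N$, so Proposition~\ref{newgrow} turns \eqref{growthassty} into $|g^\sharp(x,s)|\le a^\sharp(x)+b|s|^{p^\sharp-1}$ with $2<p^\sharp<2^*$ and $a^\sharp\in L^q(\Omega)$.

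With all the hypotheses of \cite{pelsqu} in force for \eqref{probmoddd} (linking geometry of the even functional from the transformed version of \eqref{cdpdmm}, Palais--Smale compactness from the transformed version of \eqref{genARcondition}, and the sign and growth conditions above), the multiplicity theorem there yields an unbounded sequence of critical levels, hence a sequence $(v_n)\subset H^1_0(\Omega)$ of pairwise distinct generalized solutions of \eqref{probmoddd}. By Proposition~\ref{soluzdisrt}, each $u_n=\varphi(v_n)$ is a generalized solution of \eqref{ooorigg} in the sense of Definition~\ref{defsol-bis}, and the $u_n$ are distinct since $\varphi$ is a bijection. For the summability claims I would import the regularity estimates of \cite{pelsqu} for the non-degenerate problem, namely $v_n\in L^{Nq/(N-2q)}(\Omega)$ when $\frac{2N}{N+2}<q<\frac{N}{2}$ and $v_n\in L^\infty(\Omega)$ when $q>\frac{N}{2}$; since \eqref{diffeoasymptoticbb} gives $|\varphi(s)|\le C(1+|s|^{1/(1-\beta)})$ for some $C>0$, composition yields $u_n\in L^{Nq(1-\beta)/(N-2q)}(\Omega)$ in the first case and $u_n\in L^\infty(\Omega)$ in the second; in the latter $v_n\in H^1_0(\Omega)\cap L^\infty(\Omega)$, so the last part of Proposition~\ref{soluzdisrt} upgrades $u_n$ to a distributional solution.

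The analytic content is entirely contained in Propositions~\ref{rm1}, \ref{menoinf}, \ref{newgrow}, \ref{invar2} and \ref{soluzdisrt}; the delicate point is the bookkeeping task of matching the transformed data against the precise list of hypotheses of the multiplicity theorem of \cite{pelsqu}, and in particular making sure that the \emph{classical} sign condition needed there is available --- which is exactly why \eqref{signostrongg} is imposed directly on $j^\sharp$, rather than being derived from a property of $j$ --- together with checking that the $V_u$-based weak-slope formulation of \cite{pelsqu} is compatible with Definition~\ref{defsol-bis}, a compatibility already recorded in the Remark following that definition.
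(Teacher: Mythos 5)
Your proposal follows essentially the same route as the paper's own proof: verify, via Propositions~\ref{rm1}, \ref{menoinf}, \ref{newgrow} and \ref{invar2} together with the oddness of $\varphi$ and the imposed sign condition \eqref{signostrongg}, that the transformed data $(j^\sharp,g^\sharp)$ meets the hypotheses of the multiplicity theorem of \cite{pelsqu}, obtain a sequence $(v_n)\subset H^1_0(\Omega)$ of generalized solutions of \eqref{probmoddd}, transport them to $u_n=\varphi(v_n)$ by Proposition~\ref{soluzdisrt}, and deduce the summability assertions from \cite[Theorem 7.1]{pelsqu} combined with the growth $|\varphi(s)|\le C(1+|s|^{1/(1-\beta)})$ from \eqref{diffeoasymptoticbb}. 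The argument and the order of the reductions coincide with the paper's; the only differences are expository (e.g.\ your explicit remarks that $\nu>2\ge 2(1-\beta)$ and that $2<p<2^*(1-\beta)$ forces $\beta<2/N$).
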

\begin{proof}
Of course, $\xi\mapsto j^\sharp(x,s,\xi)$ is strictly convex. 
By assumptions \eqref{originalgrowths1}-\eqref{originalgrowths2}, \eqref{growthassty}, \eqref{genARcondition} and \eqref{cdpdmm},
in light of Propositions~\ref{rm1}, \ref{menoinf}, \ref{newgrow} and \ref{invar2} and taking into account the sign condition
\eqref{signostrongg} for $j^\sharp$, \cite[assumptions (1.1)-(1.4), (1.7), (2.2), (2.4) and the variant 
\eqref{genARcondition} for $j^\sharp$ of conditions (1.9) and (2.3)
joined together which still guarantees the boundedness of Palais-Smale sequences]{pelsqu} 
are satisfied for $j^\sharp$ and $g^\sharp$ for some $R^\sharp$. 
Also, since $\varphi$ is odd, \eqref{parita} yields
$$
j^\sharp(x,-s,-\xi)=j(x,\varphi(-s),-\varphi'(-s)\xi)=j(x,-\varphi(s),-\varphi'(s)\xi)=j^\sharp(x,s,\xi),
$$
for a.e.\ $x\in\Omega$ and all $(s,\xi)\in\R\times\R^N$ and, analogously, \eqref{g-disparit} yields
$$
g^\sharp(x,-s)=g(x,\varphi(-s))\varphi'(-s)=g(x,-\varphi(s))\varphi'(s)=-g^\sharp(x,s),
$$
for a.e.\ $x\in\Omega$ and all $s\in\R$. 
Then, we are allowed to apply \cite[Theorem 2.1]{pelsqu} and obtain a sequence $(v_h)\subset H^1_0(\Omega)$
of generalized solutions of \eqref{probmoddd} in the sense of \cite{pelsqu}, namely 
\begin{equation*}
j_\xi^\sharp(x,v_h,\nabla v_h)\cdot\nabla v_h\in\elle1,\qquad j_s^\sharp(x,v_h,\nabla v_h)v_h  \in \elle1,
\end{equation*}
and
\begin{equation*}
\into j_\xi^\sharp(x,v_h,\nabla v_h)\cdot\nabla \psi +\into
j_s^\sharp(x,v_h,\nabla v_h)\psi=\into g^\sharp(x,v_h)\psi, \quad\forall \psi\in V_{v_h}.
\end{equation*}
In particular, $(v_n)$ is a sequence of $H^1_0(\Omega)$ generalized solutions of problem \eqref{probmoddd}
in the sense of Definition~\ref{defsol-bis}.
The desired existence assertion now follows from Proposition~\ref{soluzdisrt} for $u_n=\varphi(v_n)$.
Concerning the summability, if $a^\sharp\in L^r(\Omega)$ and	
$|g^\sharp(x,s)|\leq a^\sharp(x)+b|s|^{(N+2)/(N-2)}$ for a.e. $x\in\Omega$ and all $s\in\R$,
then, by \cite[Theorem 7.1]{pelsqu}, a generalized solution $v\in H^1_0(\Omega)$ of problem \eqref{probmoddd} belongs to
$L^{Nr/(N-2r)}(\Omega)$ for any $2N/(N+2)<r<N/2$ and to $L^\infty(\Omega)$, for all $r>N/2$. 
Since $g$ is subjected to \eqref{growthassty}, by Proposition~\ref{newgrow}, we also get the final conclusions.
\end{proof}

\begin{remark}\rm
We believe that Theorem~\ref{existthmm} remains true if \eqref{signostrongg} is substituted by \eqref{generalsign}.
\end{remark}

\begin{remark}\rm
For $\beta=0$, the summability of solutions coincide with the standard one.
\end{remark}

\noindent
The next proposition yields a class of $j$, which is the one studied in \cite{AbO} (condition \eqref{Ab-13}
below is precisely condition (1.3) in \cite{AbO}), 
satisfying the assumptions of Theorem~\ref{existthmm}.

\begin{proposition}
    \label{invspec}
    Assume that $j:\Omega\times\R\times\R^N\to\R$ is of the form
    $$
j(x,s,\xi)=\frac{1}{2}a(x,s)|\xi|^2,
    $$
    where $a(x,\cdot)\in C^1(\R,\R^+)$ for a.e.\ $x\in \Omega$. Assume furthermore that there exist $R\geq 0$ such that
\begin{equation}
	\label{Ab-13}
-2\beta a(x,s)  \leq D_sa(x,s)(1+|s|){\rm sign}(s) \leq 0,
\end{equation}
for a.e.\ $x \in \Omega$ and all $s\in\R$ with $|s|\geq R$.
Let $\varphi\in C^2(\R)$ be a diffeomorphism according to Definition~\ref{diffeoclass}
which is addition satisfies 
\begin{equation} 
	\label{four}
\varphi''(s)-\frac{\beta\varphi'(s)^2}{1+\varphi(s)}\geq 0,\qquad\text{for all
$s\in\R$ with $s\geq 1$}.
\end{equation}
Then there exist $\nu^\sharp>2$, $\delta^\sharp>0$ and $R^\sharp>0$ such that
\begin{equation*}
    sj^\sharp_s(x,s,\xi)\geq 0,\qquad
    \nu^\sharp j^\sharp(x,s,\xi)-(1+\delta^\sharp)j^\sharp_\xi(x,s,\xi)\cdot\xi
    -j^\sharp_s(x,s,\xi)s\geq 0
\end{equation*}
for a.e.~$x \in \Omega$, all $\xi \in {\R}^N$, and every $s \in
\R$ with $|s|\geq R^\sharp$.
\end{proposition}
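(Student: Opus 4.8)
The plan is to exploit the quadratic structure of $j$. Since $j(x,s,\xi)=\tfrac12 a(x,s)|\xi|^2$, we have $j^\sharp(x,s,\xi)=\tfrac12 a^\sharp(x,s)|\xi|^2$ with $a^\sharp(x,s)=a(x,\varphi(s))\varphi'(s)^2$, which is strictly positive by \eqref{originalgrowths1} and \eqref{one}. Hence $j^\sharp_\xi(x,s,\xi)\cdot\xi=a^\sharp(x,s)|\xi|^2=2j^\sharp(x,s,\xi)$, while $s\,j^\sharp_s(x,s,\xi)=\tfrac12\,s\,D_s a^\sharp(x,s)\,|\xi|^2$, where a direct differentiation gives
$$
s\,D_s a^\sharp(x,s)=s\varphi'(s)\Big[(D_s a)(x,\varphi(s))\,\varphi'(s)^2+2\,a(x,\varphi(s))\,\varphi''(s)\Big],
$$
for a.e.\ $x\in\Omega$ and all $s\in\R$. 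Thus both assertions reduce to one‑sided pointwise estimates, for $|s|$ large, on the single scalar $s\,D_s a^\sharp(x,s)$: a lower bound by $0$ for the sign condition, and an upper bound for the Ambrosetti--Rabinowitz inequality.

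For the sign condition I would first fix $R^\sharp\geq 1$ so large that $|\varphi(s)|\geq R$ whenever $|s|\geq R^\sharp$; this is possible because $\varphi$, being an odd diffeomorphism of $\R$ with $\varphi'\geq\sigma>0$ by \eqref{one}, is increasing with $\varphi(s)\to\pm\infty$ as $s\to\pm\infty$, and in particular $s$ and $\varphi(s)$ share sign. For $s\geq R^\sharp$ the left inequality in \eqref{Ab-13} at $t=\varphi(s)$ yields $(D_s a)(x,\varphi(s))\geq-2\beta a(x,\varphi(s))/(1+\varphi(s))$, so the bracket above is bounded below by $2\,a(x,\varphi(s))\bigl[\varphi''(s)-\beta\varphi'(s)^2/(1+\varphi(s))\bigr]\geq0$ by \eqref{four} and $a\geq0$; since $s\varphi'(s)>0$ we get $s\,D_s a^\sharp(x,s)\geq0$. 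For $s\leq-R^\sharp$ the argument is identical once \eqref{four} and \eqref{Ab-13} are rewritten in odd form (using that $\varphi''$ is odd, $\varphi'$ even, $\varphi(s)<0$): there $(D_s a)(x,\varphi(s))\geq0$, $\varphi''(s)\leq-\beta\varphi'(s)^2/(1+|\varphi(s)|)$ and $s\varphi'(s)<0$, and all signs again combine to give $s\,D_s a^\sharp(x,s)\geq0$. Multiplying by $\tfrac12|\xi|^2\geq0$ proves $s\,j^\sharp_s(x,s,\xi)\geq0$ for $|s|\geq R^\sharp$.

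For the Ambrosetti--Rabinowitz inequality, since $j^\sharp_\xi(x,s,\xi)\cdot\xi=2j^\sharp(x,s,\xi)$, its left-hand side equals $(\nu^\sharp-2-2\delta^\sharp)\,j^\sharp(x,s,\xi)-j^\sharp_s(x,s,\xi)s$; because $j^\sharp\geq0$ it suffices to find a small $\eta>0$ and, possibly enlarging $R^\sharp$, the bound $j^\sharp_s(x,s,\xi)s\leq\bigl(\tfrac{2\beta}{1-\beta}+\eta\bigr)j^\sharp(x,s,\xi)$ for $|s|\geq R^\sharp$, and then to set $\delta^\sharp=\eta$ and $\nu^\sharp=2+\tfrac{2\beta}{1-\beta}+4\eta>2$. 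Dividing the displayed identity by $a^\sharp(x,s)>0$ gives
$$
\frac{s\,D_s a^\sharp(x,s)}{a^\sharp(x,s)}=\frac{s\varphi'(s)\,(D_s a)(x,\varphi(s))}{a(x,\varphi(s))}+\frac{2s\varphi''(s)}{\varphi'(s)}.
$$
The right inequality in \eqref{Ab-13} makes $(D_s a)(x,t)$ have sign opposite to ${\rm sign}(t)$, so, as $s$ and $\varphi(s)$ share sign, the first summand is $\leq0$ for $|s|\geq R^\sharp$; by \eqref{two} and the oddness of $\varphi$ the second summand tends to $2\beta/(1-\beta)$ as $|s|\to\infty$, hence is $\leq\tfrac{2\beta}{1-\beta}+\eta$ for $|s|\geq R^\sharp$ after a further enlargement of $R^\sharp$. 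Multiplying through by $\tfrac12|\xi|^2\geq0$ yields the required bound on $j^\sharp_s(x,s,\xi)s$, which closes the proof.

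The step that will demand the most care is the sign bookkeeping for $s<0$: one has to record the odd counterparts of \eqref{four} and \eqref{Ab-13} and choose a single $R^\sharp>0$ making \eqref{Ab-13}, \eqref{four} and both limits in \eqref{two} simultaneously usable. Beyond that, the whole argument is elementary manipulation of the quadratic form, and notably uses no hypothesis on $g$.
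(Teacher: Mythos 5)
Your proof is correct. The argument for the sign condition $sj^\sharp_s(x,s,\xi)\geq 0$ coincides with the paper's: fix $R^\sharp\geq 1$ large enough that $|\varphi(s)|\geq R$ for $|s|\geq R^\sharp$, compute $j^\sharp_s$ explicitly, use the left inequality of \eqref{Ab-13} together with \eqref{four} (and its odd counterpart for $s<0$), and conclude by the positivity of $a$ and $\varphi'$.

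For the Ambrosetti--Rabinowitz inequality, however, you take a genuinely different route. The paper first checks that the original $j$ satisfies $j_s(x,s,\xi)s\leq 0$ for $|s|\geq R$ (a consequence of the right inequality in \eqref{Ab-13}) and that $\xi\mapsto j(x,s,\xi)$ is $2$-homogeneous, and then simply invokes Corollary~\ref{invar2-cor} with $g\equiv 0$, which itself rests on Proposition~\ref{invar2}. You instead prove the $\sharp$-inequality directly: writing $j^\sharp_\xi\cdot\xi=2j^\sharp$ reduces the claim to the one-sided bound $j^\sharp_s(x,s,\xi)s\leq\bigl(\tfrac{2\beta}{1-\beta}+\eta\bigr)j^\sharp(x,s,\xi)$, which you obtain from the explicit formula for $sD_sa^\sharp/a^\sharp$, the right inequality in \eqref{Ab-13}, and the limit in \eqref{two}; picking $\delta^\sharp=\eta$ and $\nu^\sharp=2+\tfrac{2\beta}{1-\beta}+4\eta$ closes the argument. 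Both approaches are valid. Your computation is self-contained and makes the admissible thresholds $\nu^\sharp\approx 2+\tfrac{2\beta}{1-\beta}$ and $\delta^\sharp$ fully explicit, whereas the paper's route emphasizes that the quadratic example is just a particular instance of the invariance result of Section~\ref{ambrrabsect}, at the cost of carrying along the general machinery. As you note, your argument also makes transparent that no hypothesis on $g$ is used, which in the paper's version is achieved by inserting $g\equiv 0$ into the corollary.
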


\begin{proof}
Let $R^\sharp\geq 1$ be such that
$|\varphi(s)|\geq R$ for all $s\in\R$ with $|s|\geq R^\sharp$. Then, by \eqref{Ab-13},
for all $s\geq R^\sharp$ we have $\varphi(s)\geq R$ and 
\begin{equation*}
\begin{aligned}
j^\sharp_s(x,s,\xi) &= [D_sa(x,\varphi(s))
(\varphi'(s))^3+2\varphi'(s)\varphi''(s) a(x,\varphi(s))]|\xi|^2/2  \\
&\geq a(x,\varphi(s)) \varphi'(s)\Big[\frac{-\beta
\varphi'(s)^2}{1+\varphi(s)}+\varphi''(s)\Big]|\xi|^2.
\end{aligned}
\end{equation*}
Recalling that $a(x,\varphi(s))$ and $\varphi'(s)$ are positive
and by \eqref{four}, one gets $j^\sharp_s(x,s,\xi)\geq 0$. 
Similarly, if $s\leq-R^\sharp$, again by \eqref{Ab-13}, we have $\varphi(s)\leq -R$ and 
\begin{equation*}
j^\sharp_s(x,s,\xi)\leq a(x,\varphi(s))
\varphi'(s)\Big[\frac{\beta \varphi'(s)^2}{1+|\varphi(s)|}+\varphi''(s)\Big]|\xi|^2,
\end{equation*}
and so that $j^\sharp_s(x,s,\xi)\leq 0$,  again due to \eqref{four}, since being
$\varphi$ and $\varphi''$ odd and $\varphi'$ even yields
\begin{equation*} 
\varphi''(s)+\frac{\beta\varphi'(s)^2}{1+|\varphi(s)|}\leq 0,\qquad\text{for all
$s\in\R$ with $s\leq -1$}.
\end{equation*}
The second inequality in the assertion follows from
Corollary \ref{invar2-cor} (applied with $g=0$), since $\xi\mapsto j(x,s,\xi)$ is $2$-homogeneous 
and $j_s(x,s,\xi)s\leq 0$ for a.e. $x \in \Omega$, all $\xi \in \R^N$ 
and any $|s|\geq R$.
\end{proof}

\begin{remark}\rm
	\label{penultimo}
	In the statement of Proposition~\ref{invspec}, in place of condition~\eqref{Ab-13}, one could consider
	the following slightly more general assumption: there exists $R\geq 0$ such that
\begin{equation}
	\label{Ab-13-bis}
-2\beta |s|a(x,s)  \leq D_sa(x,s)(b(x)+s^2){\rm sign}(s) \leq 0,
\end{equation}
for a.e.\ $x \in \Omega$ and all $s\in\R$ with $|s|\geq R$, for some 
measurable function $b:\Omega\to\R$ such that $\nu^{-1}\leq b(x)\leq \nu$, for some $\nu>0$.
This condition is satisfied for instance by $a(x,s)=(b(x)+s^2)^{-\beta}$ with $b$ measurable
and bounded between positive constants.
\end{remark}

\begin{remark}\rm
When the maps $s\mapsto j^\sharp(x,s,\xi),j_s^\sharp(x,s,\xi),j_\xi^\sharp(x,s,\xi)$
are bounded, the variational formulation of 
\eqref{probmoddd} can be meant in the sense of distributions (see Proposition~\ref{soldistr}). 
For instance, as it can be easily verified, this occurs for the $a$ mentioned in Remark~\ref{penultimo},
$a(x,s)=(b(x)+s^2)^{-\beta}$. 
\end{remark}

\bigskip
\noindent
{\bf Acknowledgments.} The second author wishes to thank Marco Degiovanni for useful discussions and Luigi Orsina
for some feedback on a preliminary version of the manuscript.
\bigskip

\bigskip

\end{document}